  \newtheorem{thm}{Theorem}[section]
 \newtheorem*{thm*}{Theorem}
 \newtheorem{cor}[thm]{Corollary}
 \newtheorem{lem}[thm]{Lemma}
 \newtheorem{prop}[thm]{Proposition}
 \theoremstyle{definition}
 \newtheorem{defin}[thm]{Definition}
 \newenvironment{defn}{\begin{defin}}{\hfill\hspace{1pt}$\triangle$\end{defin}}
 \theoremstyle{remark}
 \newtheorem{rema}[thm]{Remark}
 \newtheorem{exe}[thm]{Example}
 \newenvironment{ex}{\begin{exe}}{\hfill\hspace{1pt}$\triangle$\end{exe}}
\newenvironment{rem}{\begin{rema}}{\hfill\hspace{1pt}$\triangle$\end{rema}}
\newcommand{\D}{\mathcal{D}}
\numberwithin{equation}{section}
\newcommand{\SHom}{{\mathcal{H}om}}
\newcommand{\punt} {{\scriptscriptstyle\bullet}}
\newcommand{\di}{\operatorname{d}}
\DeclareMathOperator{\Coker}{{Coker}}
\newcommand{\cE}{{\mathcal E}}
\newcommand{\dSHom}[1]{{\mathcal{H}om_{#1}^{\scriptscriptstyle\bullet}}}
\newcommand{\Id}{{\operatorname{Id}}}
\newcommand{\dHom}[1]{{\Hom_{#1}^{\scriptscriptstyle\bullet}}}
\newcommand{\lotimes}{{\,\stackrel{\mathbf L}{\otimes}\,}}
\newcommand{\bL}{{\mathbf L}}
\newcommand{\bR}{{\mathbf R}}
\newcommand{\cO}{{\mathcal O}}
\DeclareMathOperator{\Spec}{{Spec}}
\newcommand{\Hom}{{\operatorname{Hom}}}
\newcommand{\bHom}{{ \operatorname{\bf Hom}}}
\begin{document}

\title[Non-exact integral functors]
 {Non-exact integral functors}

\author{Fernando Sancho de Salas}

\address{Departamento de Matem\'{a}ticas e Instituto Universitario de F\'{\i}sica Fundamental y Matem\'{a}ticas (IUFFyM),
Universidad de Salamanca, Plaza de la Merced 1-4, 37008 Salamanca,
Spain}

\email{fsancho@usal.es}



\subjclass[2000]{Primary 14F05; Secondary 18E30}

\keywords{derived categories, integral functors, linear functors}

\thanks {Work supported by research projects MTM2009-07289 (MEC)
and GR46 (JCYL)}

\date{\today}

\dedicatory{}



\begin{abstract} We give a natural notion of (non-exact) integral
functor $D_\text{perf}(X)\to D^b_c(Y)$ in the context of
$k$-linear and graded categories. In this broader sense, we prove
that every $k$-linear and graded functor is integral.
\end{abstract}

\maketitle

\section*{Introduction}

Let $k$ be a field, $X$ and $Y$   two projective $k$-schemes and
$K$ an object of $D^b_c(X\times_k Y)$. Let us denote $p\colon
X\times Y\to Y$ and $q\colon X\times Y\to X$ the natural
projections. One has a functor
\[\aligned \Phi_K\colon D_\text{perf}(X)&\to D^b_c(Y),\\ M &\mapsto p_*(K\otimes q^*M )\endaligned
\] This
functor is $k$-linear, graded and exact. We shall say that
$\Phi_K$ is an {\it exact} integral functor of kernel $K$. We have
then a functor
\[ \aligned \Phi\colon D^b_c(X\times Y)&\to \bHom^{\text{ex}}_k(D_\text{perf}(X),D^b_c(Y))\\
K&\mapsto \Phi_K  \endaligned  \] where
$\bHom^{\text{ex}}_k(D_\text{perf}(X),D^b_c(Y))$ denotes the
category of $k$-linear, graded and exact functors (with $k$-linear
and graded natural transformations). This functor is, in general,
neither essentially injective or full  (see \cite{CaSte10-2}) or
faithful (see \cite{Cal}). However, one of the main open questions
is whether it is essentially surjective. One has a positive answer
for fully faithful functors $D_\text{perf}(X)\to D_\text{perf}(Y)$
due to Orlov and Lunts (see \cite{LuOr} and \cite{Or97}). More
generally, A. Canonaco and P. Stellari have shown in
\cite{CaSte07} (see also \cite{CaSte10} for the result in the
supported case) that any exact functor $F\colon D^b(X)\to D^b(Y)$
satisfying
\[ \Hom_{D^b_c(Y)}(F(A), F(B)[k])=0\] for any sheaves $A$ and $B$
on $X$ and any integer $k<0$, is integral. There are also
generalizations of the fully faithful case to derived stacks (see
\cite{Kaw02}) and twisted categories (see \cite{CaSte07}).

The best evidence for a positive answer in general is due to the
results of T\"{o}en concerning dg-categories. Indeed, it is proved in
\cite{To07} that all dg (quasi-)functors between the dg-categories
of perfect complexes on smooth proper schemes are of Fourier-Mukai
type. This result, together with the conjecture by Bondal, Larsen
and Lunts in \cite{BoLarLu} that states that all exact functors
between the bounded derived categories of coherent sheaves on
smooth projective varieties should be liftable to dg
(quasi-)functors between the corresponding dg-enhancements, would
give a positive answer to the question.

In this paper we show that if we work in the context of $k$-linear
and graded categories and $k$-linear and graded functors (i.e., we
forget exactness) the answer is positive. Of course we have to say
what a $k$-linear and graded (may be non exact) integral functor
means. The idea is very simple: since an object $K\in
D^b_c(X\times Y)$ may be thought of as an {\it exact} functor
$D_\text{perf}(X\times Y)\to D_\text{perf}(k)$, we shall instead
consider, as a kernel, a $k$-linear and graded (may be non exact)
functor $\omega\colon D_\text{perf}(X\times Y)\to D (k)$. Let us
be more precise:

For any $k$-scheme  $f\colon Z\to \Spec k$, let us denote
$D_\text{perf}(Z)^*$ the category of $k$-linear and graded
functors $D_\text{perf}(Z)\to D(k)$. One has a natural functor, $
D^b_c(Z)\to D_\text{perf}(Z)^*$, $K\mapsto \omega_K$, where
$\omega_K$ is the exact integral functor of kernel $K$, i.e.,
$\omega_K(M)=f_*(K\otimes M)$.
 This functor is fully faithful and its essential image is the
subcategory $D_\text{perf}(Z)^\vee$ of exact and perfect functors
(perfect means that it takes values in $D_\text{perf}(k)$).


Now, let $\omega\in D_\text{perf}(X\times Y)^*$. It induces, in
the obvious way, a $k$-linear and graded functor
\[\aligned \Phi_\omega\colon D_\text{perf}(X)&\to D_\text{perf}(Y)^*\\
M &\mapsto \Phi_\omega(M)
\endaligned,\]
i.e.,  $\Phi_\omega (M)(N)=\omega (q^*M\otimes p^*N)$ (see also
\eqref{integralfunctor} for an alternative description which is
closer to the usual definition). We shall say that $\Phi_\omega$
is an integral functor of kernel $\omega$. If $\omega$ is exact
and perfect, then $\omega\simeq \omega_K$ for an unique $K\in
D^b_c(X\times Y)$. Then $\Phi_\omega$ takes values in
$D_\text{perf}(Y)^\vee\simeq D^b_c(Y)$ and $\Phi_\omega\simeq
\Phi_K$.

One has then a functor
\[ \aligned \Phi\colon D_\text{perf}(X\times Y)^*&\to \bHom_k (D_\text{perf}(X),D_\text{perf}(Y)^*)\\
\omega&\mapsto \Phi_\omega  \endaligned  \] extending the functor
$\Phi\colon D^b_c(X\times Y) \to \bHom^{\text{ex}}_k
(D_\text{perf}(X),D^b_c(Y) )$.

The aim of this paper is to prove that $\Phi\colon
D_\text{perf}(X\times Y)^* \to \bHom_k (D_\text{perf}(X),
D_\text{perf}(Y)^*)$ is essentially surjective. Even more, we
shall construct a right inverse of $\Phi$, i.e., a functor
$$\Psi\colon \bHom_k (D_\text{perf}(X), D_\text{perf}(Y)^*)\to
D_\text{perf}(X\times Y)^* $$ such that $\Phi\circ\Psi$ is
isomorphic to the identity. In other words, for any $k$-linear and
graded functor $F\colon D_\text{perf}(X)\to D_\text{perf}(Y)^*$
there exist  a kernel $\omega_F\in D_\text{perf}(X\times Y)^*$ and
an isomorphism $F\simeq \Phi_{\omega_F}$ which are functorial on
$F$. This will be a consequence of an extension theorem (Theorem
\ref{lifting}) that states that if $F\colon D_\text{perf}(X)\to
D_\text{perf}(Y)^*$ is a $k$-linear and graded functor and $S$ is
any $k$-scheme, then $F$ can be lifted to an $S$-linear functor
$F_S\colon D_{\text{fhd}/X,S}(X\times S )\to D_\text{perf}(Y\times
S)^*$, where $D_{\text{fhd}/X,S}(X\times S)$ is the category of
objects in $D^b_c(X\times S)$ of finite homological dimension over
both $X$ and $S$ (see Definition \ref{fhd}).

Let us denote $D_\text{perf}(X\times Y)^{Y-\vee}$ the full
subcategory of $D_\text{perf}(X\times Y)^*$ whose objects are the
$\omega\in D_\text{perf}(X\times Y)^*$ which are exact and perfect
on $Y$, i.e. such that for any $M\in D_\text{perf}(X)$,
$\Phi_\omega (M)$ belongs to $D_\text{perf}(Y)^\vee$. Taking into
account the equivalence $D^b_c(Y)\overset\sim\to
D_\text{perf}(Y)^\vee$, we obtain functors $\Phi\colon
D_\text{perf}(X\times Y)^{Y-\vee} \to \bHom_k (D_\text{perf}(X),
D^b_c(Y))$ and $\Psi\colon \bHom_k (D_\text{perf}(X), D^b_c(Y))\to
D_\text{perf}(X\times Y)^{Y-\vee}$, such that $\Phi\circ\Psi$ is
isomorphic to the identity. Finally, if we denote
$D_\text{perf}(X\times Y)^{{\rm bi}-\vee}$ the full subcategory of
$D_\text{perf}(X\times Y)^*$ whose objects are the $\omega\in
D_\text{perf}(X\times Y)^*$ which are bi-exact (see Definition
\ref{def-biexact}), then we obtain functors $\Phi\colon
D_\text{perf}(X\times Y)^{{\rm bi}-\vee} \to \bHom_k^\text{ex}
(D_\text{perf}(X), D^b_c(Y))$ and $\Psi\colon \bHom_k^\text{ex}
(D_\text{perf}(X), D^b_c(Y))\to D_\text{perf}(X\times Y)^{{\rm
bi}-\vee}$, such that $\Phi\circ\Psi$ is isomorphic to the
identity.

We shall also give these results in the relative setting. That is,
assume that $X$ and $Y$ are flat $T$-schemes and let $p\colon
X\times_TY\to Y$, $q\colon X\times_TY\to X$ be the natural
projections. For each $M\in D^b_c(X\times_TY)$ one has a
$T$-linear functor $\Phi_K\colon D_\text{perf}(X)\to D^b_c(Y)$,
$M\mapsto p_*(K\otimes q^*M)$; one has  then a $T$-linear functor
\[ \Phi\colon D^b_c(X\times_TY)\to \bHom_T^\text{ex} (D_\text{perf}(X),
D^b_c(Y))\] More generally, for any $\omega\in
D_\text{perf}(X\times_TY)^*$ one has a $T$-linear functor
$\Phi_\omega\colon D_\text{perf}(X)\to D_\text{perf}(Y)^*$; one
has  then a $T$-linear functor
\[ \Phi\colon D_\text{perf}(X\times_TY)^*\to \bHom_T  (D_\text{perf}(X),
D_\text{perf}(Y)^*).\] As before, we shall construct a $T$-linear
functor
\[ \Psi\colon \bHom_T (D_\text{perf}(X), D_\text{perf}(Y) ^*)\to
D_\text{perf}(X\times_T Y)^*\] such that $\Phi\circ\Psi$ is
isomorphic to the identity.

\section{Notations and basic results}

Throughout the paper $k$ denotes a field. All the schemes are
assumed to be proper $k$-schemes. If $f\colon X\to Y$ is a
morphism of $T$-schemes, we shall still denote by $f$ the morphism
$X\times_TT'\to Y\times_TT'$ induced by $f$ after a base change
$T'\to T$.

For a scheme $X$, we denote by $D(X)$ the derived category of
complexes of $\cO_X$-modules with quasi-coherent cohomology,
$D^b_c(X)$ the full subcategory of complexes with bounded and
coherent cohomology and $D_\text{perf}(X)$ the full subcategory of
perfect complexes.

Since we shall deal with derived categories, we shall use the
abbreviated notations $f_*, f^*,\otimes, \dots$ for the derived
functors $\bR f_*,\bL f^*,\overset\bL\otimes,\dots$.

We shall use extensively the following results of derived
categories:
\begin{enumerate}
\item Projection formula: If $f\colon X\to Y$ is a morphism of
schemes, then one has a natural isomorphism $f_*(M\otimes
f^*L)\simeq (f_*M)\otimes L$, with $M\in D(X)$, $L\in D(Y)$.
\item Flat base change: Let us consider a cartesian diagram
\[\xymatrix{ X\times_TY \ar[r]^p\ar[d]^q & Y\ar[d]^q \\ X\ar[r]^p
& T}\] with $p$ flat. For any $N\in D(Y)$ one has a natural
isomorphism $p^*q_*N\overset\sim\to q_*p^*N$.
\end{enumerate}


\subsection{Non-exact integral functors}$\,$\medskip

Let $k$ be a field, $D(k)$ the derived category of complexes of
$k$-vector spaces.

\begin{defn} Let $p\colon Z\to \Spec k$ be a $k$-scheme. A {\it linear form} on $Z$
is a $k$-linear and graded functor $\omega\colon
D_\text{perf}(Z)\to D(k)$. We say that $\omega $ is {\it perfect}
if it takes values in $D_\text{perf}(k)$. We say that $\omega$ is
exact if it takes exact triangles into exact triangles.

A linear morphism $\omega\to\omega'$ between linear forms on $Z$
is just a morphism of $k$-linear and graded functors. We shall
denote by $D_\text{perf}(Z)^*$ the category of $k$-linear forms on
$Z$ and $k$-linear morphisms and by $D_\text{perf}(Z)^\vee$ the
full subcategory of $D_\text{perf}(Z)^*$ whose objects are the
exact and perfect linear forms on $Z$. Both $D_\text{perf}(Z)^*$
and $D_\text{perf}(Z)^\vee$ are $k$-linear and graded categories.
\end{defn}

For any $K\in D^b_c(Z)$ one has a (perfect and exact) linear form
$\omega_K$ on $Z$, defined by $\omega_K(M)=p_*(K\otimes M)$.
Moreover one has the following

\begin{prop}\label{Fourier} Let $Z$ be a projective $k$-scheme. The functor
$D^b_c(Z)\to D_\text{\rm perf}(Z)^\vee$, $K\mapsto
\omega_K$, is an equivalence (of $k$-linear and graded
categories).
\end{prop}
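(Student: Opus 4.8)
The plan is to prove two things: that $K\mapsto\omega_K$ is fully faithful, and that its essential image is exactly $D_\text{perf}(Z)^\vee$. That the functor is $k$-linear and graded is clear, and it does take values in $D_\text{perf}(Z)^\vee$: $\omega_K$ is exact because $-\otimes M$ and $p_*$ are, and it is perfect because for $M\in D_\text{perf}(Z)$ one has $K\otimes M\in D^b_c(Z)$, so $p_*(K\otimes M)$ has bounded finite-dimensional cohomology, $Z$ being proper.

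For full faithfulness I would first rewrite the form: for $M$ perfect, with $M^\vee:=\bR\SHom(M,\cO_Z)$ the (perfect) dual, one has $K\otimes M\simeq\bR\SHom(M^\vee,K)$, whence $\omega_K(M)\simeq p_*\bR\SHom(M^\vee,K)=\bR\Hom_Z(M^\vee,K)$. As $M\mapsto M^\vee$ is an anti-autoequivalence of $D_\text{perf}(Z)$, a morphism of forms $\omega_K\to\omega_{K'}$ is the same datum as a morphism $\bR\Hom_Z(-,K)\to\bR\Hom_Z(-,K')$ of graded functors on $D_\text{perf}(Z)^{\mathrm{op}}$. The key input is that $D(Z)$ is compactly generated by $D_\text{perf}(Z)$ ($Z$ being Noetherian), so $K$ can be written as a filtered homotopy colimit $K\simeq\operatorname{hocolim}_n P_n$ with $P_n\in D_\text{perf}(Z)$. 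For such $P_n$ the form $\omega_{P_n}=\bR\Hom_Z(P_n^\vee,-)$ is corepresentable, so the enriched Yoneda lemma gives $\Hom(\omega_{P_n},\omega_{K'})\simeq\omega_{K'}(P_n^\vee)=\bR\Hom_Z(P_n,K')$; compactness of $P_n$ lets $\bR\Hom_Z(P_n,-)$ pass through the colimit computing $\omega_K$, while $\bR\Hom_Z(-,K')$ turns it into a homotopy limit. Running these through, one obtains a natural isomorphism in $D(k)$ between $\bR\Hom_Z(K,K')$ and the morphism complex of $\omega_K,\omega_{K'}$ in $D_\text{perf}(Z)^*$, whose $H^0$ is the desired bijection $\Hom_{D^b_c(Z)}(K,K')\cong\Hom(\omega_K,\omega_{K'})$. (More conceptually: the same computation shows that $\mathcal E\mapsto\bigl(M\mapsto p_*(\mathcal E\otimes M)\bigr)$ identifies the category of \emph{all} exact $k$-linear forms on $Z$ with $D(Z)$ -- essentially the statement $D(Z)=\operatorname{Ind}D_\text{perf}(Z)$ together with the duality -- under which $K\mapsto\omega_K$ is nothing but the fully faithful inclusion $D^b_c(Z)\hookrightarrow D(Z)$; I would organize the write-up around this.)

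Given this, essential surjectivity onto $D_\text{perf}(Z)^\vee$ reduces to the claim that an object $\mathcal E\in D(Z)$ yields a \emph{perfect} form $\omega_{\mathcal E}$ if and only if $\mathcal E\in D^b_c(Z)$; the form is then automatically exact. The "if" direction is the computation of the first paragraph. For "only if", one tests on the twists: $\omega_{\mathcal E}(\cO_Z(-n))=p_*(\mathcal E\otimes\cO_Z(n))$ must be a perfect complex of $k$-vector spaces for every $n\in\mathbb Z$. Feeding this, for all $n$, into the hypercohomology spectral sequence $H^p(Z,\mathcal H^q(\mathcal E)\otimes\cO_Z(n))\Rightarrow\mathbb H^{p+q}(Z,\mathcal E\otimes\cO_Z(n))$ and using Serre's vanishing and finiteness theorems for the ample sheaf $\cO_Z(1)$, one deduces that $\mathcal E$ has bounded cohomology and that each $\mathcal H^q(\mathcal E)$ is coherent, i.e.\ $\mathcal E\in D^b_c(Z)$.

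Putting the two halves together gives the equivalence $D^b_c(Z)\overset\sim\to D_\text{perf}(Z)^\vee$ of $k$-linear graded categories. The step I expect to be the real obstacle is the "only if" above -- recovering boundedness and coherence of $\mathcal E$ from perfectness of all of its twisted global cohomologies -- and it is precisely here that projectivity of $Z$ (rather than mere properness) is used. Everything else is formal: the identification of exact forms with $D(Z)$, and with it full faithfulness, goes through cleanly because $D(k)$, the derived category of a field, is rigid enough that exactness of a form carries no higher coherence obstructions.
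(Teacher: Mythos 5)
Your overall strategy inverts the order in which the paper uses the two hypotheses on $\omega$, and this is where the proof breaks. The paper's argument is short: because $\omega$ is exact \emph{and perfect}, the contravariant functor $M\mapsto\Hom_{D(k)}(\omega(M),E)$ on $D_\text{perf}(Z)$ is cohomological \emph{of finite type}, hence representable by an object $\omega^\#(E)\in D^b_c(Z)$ by the Bondal--Van den Bergh representability theorem (this is where projectivity of $Z$ enters); $k$-linearity gives $\omega^\#(E)\simeq\omega^\#(k)\otimes p^*E$, and Serre duality then recovers $K$ with $\omega\simeq\omega_K$. You instead first claim that \emph{every} exact $k$-linear form is $\omega_{\mathcal E}$ for some $\mathcal E\in D(Z)$, calling this ``formal'' from $D(Z)=\operatorname{Ind}D_\text{perf}(Z)$, and only afterwards use perfectness (via twists and Serre vanishing) to force $\mathcal E\in D^b_c(Z)$. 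That first claim is an unrestricted Brown/Adams-type representability statement for cohomological functors on the compact objects of $D(Z)$; it is not formal, and statements of this kind are known to fail for derived categories in general (Christensen--Keller--Neeman). The identity $D(Z)=\operatorname{Ind}D_\text{perf}(Z)$ holds at the dg-/$\infty$-categorical level, and descending it to honest graded, non-enhanced functors into $D(k)$ is exactly the kind of lifting problem this whole paper is about. The finite-type condition supplied by perfectness is precisely the hypothesis Bondal--Van den Bergh need, so the step you set aside as the easy one is the crux, and the step you single out as the real obstacle (the twisting argument) is secondary.

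The full-faithfulness argument has a second, independent gap. Writing $K\simeq\operatorname{hocolim}_nP_n$ with $P_n$ perfect does not let you compute $\Hom_{D_\text{perf}(Z)^*}(\omega_K,\omega_{K'})$ as a limit of the groups $\Hom(\omega_{P_n},\omega_{K'})\simeq\Hom_{D(Z)}(P_n,K')$: homotopy colimits in a triangulated category carry no universal property, $\omega_K$ is not the colimit of the $\omega_{P_n}$ in the functor category, and even the map $\Hom_{D(Z)}(K,K')\to\varprojlim_n\Hom_{D(Z)}(P_n,K')$ has a $\varprojlim^1$ defect; so ``running the colimit through'' does not produce the asserted isomorphism. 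The paper does not prove full faithfulness this way either: it deduces essential surjectivity from representability as above and defers the rest, with a more general statement, to \cite{SaSa}. Your Yoneda computation for a single perfect $P_n$ is correct, and the ``only if'' part of your second paragraph is a reasonable sketch, but neither repairs these two gaps.
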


\begin{proof} It is proved in \cite{BoVdB03} that any
contravariant cohomological functor of finite type over
$D_\text{perf}(Z)$ ($Z$ a projective scheme over $k$) is
representable by a bounded complex with coherent homology. It
follows that if $\omega\colon D_\text{perf}(Z)\to
D_\text{perf}(k)$ is exact, then it has a  right pseudo adjoint
$\omega^\#\colon D_\text{perf}(k)\to D^b_c(Z)$; that is, one has
\[ \Hom_{D(k)}(\omega(M),E) \simeq \Hom_{D(Z)}(M,\omega^\#(E))\] for any
$M\in D_\text{perf}(Z)$, $E\in D_\text{perf}(k)$. Since
$\omega^\#$ is $k$-linear and graded, one has $\omega^\#(E) \simeq
\omega^\#(k)\otimes p^*E$, and then $\omega\simeq \omega_K$ with
$K=\bR\SHom (\omega^\#(k),p^!k)$. Conclusion follows (see
\cite{SaSa} for further details and a more general statement).
\end{proof}

\begin{defn} {\it Tensor product,   direct and inverse image}.
\begin{enumerate}
\item $D_\text{perf}(Z)^*$ has a $D_\text{perf}(Z)$-module
structure: for any $M\in D_\text{perf}(Z)$, $\omega\in
D_\text{perf}(Z)^*$, we define $\omega\otimes M\in
D_\text{perf}(Z)^*$ by the formula $(\omega\otimes
M)(N)=\omega(M\otimes N)$.

\item For any morphism of $k$-schemes $f\colon Z\to Z'$, we define
$f_*\colon D_\text{perf}(Z)^*\to D_\text{perf}(Z')^*$ as the
$k$-linear and graded functor induced by $f^*\colon
D_\text{perf}(Z')\to D_\text{perf}(Z)$; that is,
$(f_*\omega)(M')=\omega(f^*M')$, for $M'\in D_\text{perf}(Z')$,
$\omega \in D_\text{perf}(Z)^*$.
\item If $f\colon Z\to Z'$ is flat, we define $f^*\colon D_\text{perf}(Z')^*\to D_\text{perf}(Z)^*$ as the
$k$-linear and graded functor induced by $f_*\colon
D_\text{perf}(Z)\to D_\text{perf}(Z')$; that is,
$(f^*\omega)(M)=\omega(f_*M)$, for $M \in D_\text{perf}(Z )$,
$\omega \in D_\text{perf}(Z')^*$.
\end{enumerate}
\end{defn}

\begin{defn}Let $p\colon X\to\Spec k$ and $q\colon Y\to\Spec k$ be two
$k$-schemes and $p\colon X\times Y\to Y$, $q\colon X\times Y\to X$
the natural projections. For each $\omega\in D_\text{perf}(X\times
Y)^*$ we have a $k$-linear and graded functor
\begin{equation}\label{integralfunctor}\aligned \Phi_\omega\colon D_\text{perf}(X)&\to
D_\text{perf}(Y)^*\\ M &\mapsto \Phi_\omega(M)=p_*(\omega\otimes
q^*M)
\endaligned\end{equation}
We say that $\Phi_\omega$ is an integral functor of kernel
$\omega$.
\end{defn}

\begin{ex} Assume that $X$ and $Y$ are projective $k$-schemes. If $\omega$ is exact and perfect,
then $\omega\simeq \omega_K$ for a unique $K\in D^b_c(X\times Y)$
by Proposition \ref{Fourier}. Then $\Phi_\omega$ takes values in
$D_\text{perf}(Y)^\vee\simeq D^b_c(Y)$ and $\Phi_\omega$ is
isomorphic to the usual exact integral functor $\Phi_K$.
\end{ex}

Let us denote $\bHom_k(D_\text{perf}(X), D_\text{perf}(Y)^*)$ the
category of $k$-linear and graded functors from $D_\text{perf}(X)$
to $D_\text{perf}(Y)^*$ and $k$-linear and graded morphisms of
functors. It is a $k$-linear and graded category in the obvious
way. One has a $k$-linear and graded functor
\[\aligned \Phi\colon D_\text{perf}(X\times Y)^* &\to \bHom_k(D_\text{perf}(X),
D_\text{perf}(Y)^*)\\ \omega &\mapsto \Phi_\omega
\endaligned\] and a commutative diagram

$$ \xymatrix{  D_\text{perf}(X\times Y)^*   \ar[r]^{\Phi\qquad\quad}   &
  \bHom_k(D_\text{perf}(X), D_\text{perf}(Y)^*)\\    D^b_c(X\times
Y)  \ar[u]  \ar[r]^{\Phi\qquad\quad}   &
\bHom_k^\text{ex}(D_\text{perf}(X), D^b_c(Y)) \ar[u] }$$
 whose
vertical maps are fully faithful.

\section{Main results}

The aim of this section is to construct a functor $\Psi\colon
\bHom_k(D_\text{perf}(X), D_\text{perf}(Y)^*)\to
D_\text{perf}(X\times Y)^* $ such that $\Phi\circ\Psi$ is
isomorphic to the identity. This will be a consequence of the
following extension theorem: a $k$-linear and graded functor
$F\colon D_\text{perf}(X)\to D_\text{perf}(Y)^*$  can be
(functorially) extended to an $S$-linear functor $F_S\colon
D_{\text{fhd}/X,S}(X\times S )\to D_\text{perf}(Y\times S)^*$, for
any $k$-scheme $S$ (see Definition \ref{fhd} for the meaning of
$D_{\text{fhd}/X,S}(X\times S )$).

\subsection{Cokernels of linear forms}$\,$\medskip

Let $E_\punt=\{ E_1 \overset{\dsize{\overset {\di_0}
\longrightarrow}}{ \dsize{\underset{\di_1}\longrightarrow} }
E_0\}$ be two maps in $D(k)$. We define $\Coker (E_\punt)$ as the
cokernel of the morphisms of vector spaces $
H(E_1)\overset{\dsize{\overset {H(\di_0)} \longrightarrow}}{
\dsize{\underset{H(\di_1)}\longrightarrow} } H(E_0)$.

%


Now let $p\colon Z \to \Spec k$ be a  $k$-scheme and let
$\omega_\punt = \{ \omega_1 \overset{\dsize{\overset {\di_0}
\longrightarrow}}{ \dsize{\underset{\di_1}\longrightarrow} }
\omega_0\}$ be two morphisms in $D_\text{perf}(Z)^*$. For each
$M\in D_\text{perf}(Z)$, let us denote $\omega_\punt(M) =\{
\omega_1(M) \overset{\dsize{\overset {\di_0(M)} \longrightarrow}}{
\dsize{\underset{\di_1(M)}\longrightarrow} } \omega_0(M)\}$. We
define $\Coker (\omega_\punt)$ as the object in
$D_\text{perf}(Z)^*$ defined by
\[ \Coker (\omega_\punt)(M)= \Coker (\omega_\punt(M))\]

%

This is clearly functorial on $\omega_\punt$. The next proposition
is immediate.

\begin{prop} Let $\omega_\punt = \{ \omega_1 \overset{\dsize{\overset {\di_0}
\longrightarrow}}{ \dsize{\underset{\di_1}\longrightarrow} }
\omega_0\}$ be two morphisms in $D_\text{\rm perf}(Z)^*$.
\begin{enumerate}
\item For any $M\in D_\text{perf}(Z)$ one has $\Coker(\omega_\punt \otimes M)=
\Coker(\omega_\punt) \otimes M$.
\item For any morphism $f\colon Z\to Z'$, one has
$f_*\Coker(\omega_\punt)=\Coker(f_*\omega_\punt)$.
\end{enumerate}

\end{prop}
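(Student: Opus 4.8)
The plan is to check both identities objectwise, exploiting that $\Coker(-)$, the module action $-\otimes M$, and the pushforward $f_*$ on linear forms are all defined pointwise on objects of the source category. There is no homological subtlety here: everything reduces to unwinding the three definitions in the right order and observing that the resulting identifications are natural, hence assemble into equalities of linear forms.

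For part (1), fix $M,N\in D_\text{perf}(Z)$. By definition of the $D_\text{perf}(Z)$-module structure, the two maps $(\omega_\punt\otimes M)(N)$ are precisely $\omega_\punt(M\otimes N)$, so that $\Coker(\omega_\punt\otimes M)(N)=\Coker\bigl((\omega_\punt\otimes M)(N)\bigr)=\Coker\bigl(\omega_\punt(M\otimes N)\bigr)$, and the latter equals $\Coker(\omega_\punt)(M\otimes N)=(\Coker(\omega_\punt)\otimes M)(N)$. These equalities are natural in $N$ (and in $M$ and in $\omega_\punt$), so they give the claimed equality of objects of $D_\text{perf}(Z)^*$, functorial as stated. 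For part (2), fix $M'\in D_\text{perf}(Z')$. Since $(f_*\omega_i)(M')=\omega_i(f^*M')$ for $i=0,1$ and the structural maps of $f_*\omega_\punt$ evaluated at $M'$ are $\di_0(f^*M')$ and $\di_1(f^*M')$, we get $(f_*\omega_\punt)(M')=\omega_\punt(f^*M')$; therefore $\Coker(f_*\omega_\punt)(M')=\Coker\bigl(\omega_\punt(f^*M')\bigr)=\Coker(\omega_\punt)(f^*M')=(f_*\Coker(\omega_\punt))(M')$, again naturally in $M'$.

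The only point that deserves a glance -- and it is the closest thing to an obstacle, though still routine -- is that all the functors written down are well defined on perfect complexes: $M\otimes(-)$ sends $D_\text{perf}(Z)$ to itself because $M$ is perfect, and $f^*$ preserves perfection, so $\omega_\punt\otimes M$ and $f_*\omega_\punt$ are again (pairs of morphisms of) linear forms to which $\Coker$ applies. Beyond this, no exactness or any further property of $\omega_0,\omega_1$ is used; the proposition is a purely formal consequence of the objectwise definitions of $\Coker(-)$, $-\otimes M$ and $f_*$ recalled above.
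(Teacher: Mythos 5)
Your proof is correct and is exactly the routine objectwise unwinding of the definitions of $\Coker$, $\otimes M$ and $f_*$ that the paper has in mind when it declares the proposition ``immediate'' and omits the argument. Nothing further is needed.
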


\medskip
\subsection{\bf The perfect-resolution.}\label{section-resolution}\hfill
\medskip

Let $p\colon X\to \Spec k$, $f\colon S\to \Spec k$ be two
$k$-schemes and $M\in D^b_c(X\times S)$ an object of finite
homological dimension over $S$ (see Definition \ref{fhd}). We
still denote by $p\colon X\times S\to S$ and $f\colon X\times S\to
X$ the natural projections. For each $\cE\in D_\text{perf}(X)$ we
shall denote $R_\cE(M)=  f^*\cE \otimes p^*p_* (f^*\cE^*\otimes
M)$, with $\cE^*=\bR\dSHom{X}(\cE,\cO_X)$. One has a natural
morphism $\rho_M^\cE\colon R_\cE(M)\to M$. We shall denote
\[ R_0(M)=\underset {\cE\in D_\text{perf}(X)} \oplus R_\cE(M)\] and $\rho_M\colon R_0(M)\to
M$ the natural map. This is functorial on $M$.

Let $R_1(M)=R_0(R_0(M))$. One has two morphisms
\[ R_1(M)\overset{\dsize{\overset {\di_0}
\longrightarrow}}{ \dsize{\underset{\di_1}\longrightarrow} }
R_0(M)\] namely: $\di_0=\rho_{R_0(M)}$ and $\di_1=R_0(\rho_M)$. It
is immediate to check that $\rho_M\circ\di_0 = \rho_M\circ\di_1$.

More explicitly, $R_1(M)=\underset{\cE_1, \cE_0\in
D_\text{perf}(X)}\oplus R_{\cE_1} R_{\cE_0}(M)$ and
\begin{equation}\label{R_1} R_{\cE_1} R_{\cE_0}(M)\simeq
f^*\cE_1\otimes p^*p_*f^*(\cE_1^*\otimes \cE_0)\otimes
p^*p_*(f^*\cE_0^*\otimes M).\end{equation}

The differentials $\di_0,\di_1\colon R_1(M)\to R_0(M)$ are induced
by the morphisms $$\aligned \rho_{R_{\cE_0}(M)}^{\cE_1}&\colon
R_{\cE_1} R_{\cE_0}(M)\to R_{\cE_0}(M) \\
R_{\cE_1}(\rho_M^{\cE_0}) &\colon R_{\cE_1} R_{\cE_0}(M)\to
R_{\cE_1}(M)\endaligned .$$

\begin{prop}\label{resolution} For any $L\in D_\text{\rm perf}(X)$,  $R_1(f^*L)\overset{\dsize{\overset {\di_0}
\longrightarrow}}{ \dsize{\underset{\di_1}\longrightarrow} }
R_0(f^*L)\overset{\rho_{f^*L}} \longrightarrow f^*L$
 is exact.
\end{prop}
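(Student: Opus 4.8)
I want to show that the complex
$R_1(f^*L) \rightrightarrows R_0(f^*L) \xrightarrow{\rho_{f^*L}} f^*L$
is exact, where exactness here means: for every $N \in D_\text{perf}(X\times S)$, applying $\dHom{X\times S}(-,N)$ (or equivalently, since we are dealing with linear forms, testing against objects) produces an exact sequence of the relevant abelian groups. Since the statement concerns objects of the form $f^*L$, the key simplification is that $R_\cE(f^*L) = f^*\cE \otimes p^*p_*(f^*\cE^* \otimes f^*L)$, and by the projection formula together with flat base change one has $p_*(f^*\cE^*\otimes f^*L) = p_*f^*(\cE^*\otimes L) = f^*p_*(\cE^*\otimes L)$... but more usefully, since $p$ is the projection $X\times S \to S$ and $f$ the projection to $X$, we get $p^*p_*(f^*(\cE^*\otimes L)) = f^*p_*(\cE^*\otimes L)$ where on the right $p\colon X\to \Spec k$; thus $R_\cE(f^*L) = f^*(\cE \otimes_k p_*(\cE^*\otimes L))$ is itself the pullback under $f$ of an object of $D_\text{perf}(X)$, namely $\cE\otimes_k \Hom^\punt_X(\cE,L)$. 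So the whole complex is $f^*$ of a complex on $X$, and it suffices to prove exactness of
$$\bigoplus_{\cE_1,\cE_0} \cE_1 \otimes_k \Hom^\punt(\cE_1,\cE_0\otimes_k\Hom^\punt(\cE_0,L)) \rightrightarrows \bigoplus_\cE \cE\otimes_k\Hom^\punt(\cE,L) \xrightarrow{\ \rho\ } L$$
in $D_\text{perf}(X)$ (or rather of the associated sequence of linear forms).

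**Key steps.** First, I would reduce to the statement on $X$ as above, recording carefully that $R_\cE(f^*L)\simeq f^*(\cE\otimes_k \dHom{X}(\cE,L))$ and that $\di_0,\di_1$ become $f^*$ of the evident maps (one is evaluation on the outer $\cE_1$-factor, the other is $\cE_1\otimes(-)$ applied to evaluation $\cE_0\otimes\dHom{}(\cE_0,L)\to L$). Second, the crucial observation: the map $\rho_L\colon \bigoplus_\cE \cE\otimes_k\dHom{X}(\cE,L)\to L$ is a \emph{split epimorphism} — indeed, taking $\cE = L$, the summand $L\otimes_k\dHom{X}(L,L)$ maps onto $L$ via evaluation, and the element $\Id_L \in \Hom(L,L)$ (in degree $0$ cohomology, using that $\dHom{}(L,L)$ has a class splitting off $k$) gives a section $L \to L\otimes_k\dHom{X}(L,L) \hookrightarrow R_0(L)$. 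So $\rho$ has a canonical section $s_L$, functorial enough for our purposes. Third — and this is where the bar-resolution structure is used — the pair $(R_0, \rho)$ with the comonad-like structure makes $R_1(L)\rightrightarrows R_0(L)\to L$ into (the beginning of) the bar resolution of the identity comonad built from the adjunction-style data; concretely, having a section $s$ of $\rho$ at every stage gives a contracting homotopy showing the augmented simplicial-type object is exact. I would phrase this without simplicial language: given $x\in R_0(L)$ with $\di_0 x = \di_1 x$ pushed appropriately, use $s_{R_0(L)}$ and $s_L$ to produce a preimage in $R_1(L)$, checking the identities $\rho\circ\di_0=\rho\circ\di_1$, $\di_0\circ s = s\circ\rho$, etc. Finally, translate back: apply $f^*$ and conclude that the sequence of linear forms is exact, i.e. for each test object the cokernel sequence is as claimed.

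**Main obstacle.** The genuine difficulty is that we are not working in an abelian category but with \emph{linear forms} — functors $D_\text{perf}(-)\to D(k)$ — and "exact" means the induced sequence of graded vector spaces $\Coker(\omega_\punt)(M)$ behaves correctly, per the $\Coker$ construction of the previous subsection. So the contracting-homotopy argument must be run after evaluating everything on an arbitrary $M\in D_\text{perf}(Y\times S)$ (or $X$), where it becomes a statement about the cokernel of two maps of \emph{actual} cochain complexes / graded vector spaces. The split-surjectivity of $\rho$ has to be understood at the level of cohomology: I need $H^\punt(\dHom{X}(L,L))$ to contain the class of $\Id_L$, which is automatic, and I need the section to interact correctly with $H(\di_0), H(\di_1)$. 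The one subtle point to get right is that the section $s$ is \emph{not} a map of linear forms in the naive sense but suffices to compute cokernels; so I would be careful to only invoke it after passing to cohomology of evaluated complexes, where $\Coker$ is literally a cokernel of linear maps and the usual "$\rho$ split $\Rightarrow$ $\ker\rho = \operatorname{im}(\di_0-\di_1)$" bookkeeping applies. Modulo that care, the proof is a direct verification.
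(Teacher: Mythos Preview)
Your approach is the paper's: a section of $\rho_{f^*L}$ via the summand $\cE=L$ (from the unit $\cO_S\to p_*(f^*L^*\otimes f^*L)$), extended to a contracting homotopy $(\di_0-\di_1)\circ h_1 + h_0\circ\rho_{f^*L} = \Id$ with $h_1=-R_0(h_0)$. The paper works directly on $X\times S$ without your base-change reduction to $X$, and your concern about linear forms is unnecessary since the homotopy identity is an equation of actual morphisms and is therefore preserved by any additive functor into a category with direct sums (this is exactly the content of the Remark immediately following the Proposition).
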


\begin{proof}  The morphism $\rho_{f^*L}^{f^*L}\colon R_{f^*L}(f^*L)\to f^*L$ has a
natural section $h\colon f^*L \to R_{f^*L}(f^*L)=  f^*L \otimes
p^*p_* (f^*L^*\otimes f^*L)$ induced by the natural map $\cO_S\to
p_* (f^*L^*\otimes f^*L)$. Then we have a map $h_0\colon f^*L\to
R_0(f^*L)$ which is $h$ in the $f^*L$-component and zero in the
others. It is clear that $h_0$ is a section of $\rho_{f^*L}$. Now
define $h_1\colon R_0(f^*L)\to R_1(f^*L)$ as $h_1=-R_0(h)$. One
can check that $(\di_0-\di_1)\circ h_1+h_0\circ \rho_{f^*L}=\Id$,
 hence the result.
\end{proof}

We shall denote $R^{X\times S/S}_\punt(M):=\{
R_1(M)\overset{\dsize{\overset {\di_0} \longrightarrow}}{
\dsize{\underset{\di_1}\longrightarrow} } R_0(M)\}$.

\begin{rem}\label{remark-resolution} If $G\colon D_\text{perf}(X\times S)\to \D$ is an additive
functor and $\D$ has infinite direct sums, we can define
$G(R_1(M)):=\underset{\cE_1\cE_0}\oplus G(R_{\cE_1}R_{\cE_0}(M))$
and $G(R_0(M)):=\underset{\cE}\oplus G(R_\cE(M))$. One has
differentials $G(\di_0), G(\di_1)\colon G(R_1(M))\to G(R_0(M))$.
We shall denote $$G(R_\punt(M))=\{ G(
R_1(M))\overset{\dsize{\overset {G(\di_0)} \longrightarrow}}{
\dsize{\underset{G(\di_1)}\longrightarrow} } G(R_0(M))\}.$$ For
any $L\in D_\text{\rm perf}(X)$,
$G(R_1(f^*L))\overset{\dsize{\overset {G(\di_0)}
\longrightarrow}}{ \dsize{\underset{G(\di_1)}\longrightarrow} }
G(R_0(f^*L))\overset{G(\rho_{f^*L})} \longrightarrow G(f^*L)$ is
exact.
\end{rem}

In the next propositions we shall prove the $S$-linearity of
$R_\punt^{X\times S/S}(M)$ and its compatibility with direct
images.

\begin{prop}\label{linearity} For any $V\in D_\text{\rm perf}(S)$ one has a
natural isomorphism $R_\punt^{X\times S/S}(M\otimes
p^*V)\overset\sim\to R_\punt^{X\times S/S}(M)\otimes p^*V$.
\end{prop}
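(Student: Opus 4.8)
The plan is to unwind the definition of $R_\punt^{X\times S/S}(M)$ and check that every ingredient in its construction is compatible with tensoring by $p^*V$. Recall that $R_\cE(M)=f^*\cE\otimes p^*p_*(f^*\cE^*\otimes M)$ and $R_0(M)=\oplus_\cE R_\cE(M)$, with $R_1(M)=R_0(R_0(M))$ given by the explicit formula \eqref{R_1}. The key elementary facts I would invoke are: tensor product commutes with the direct sums indexing $R_0$ and $R_1$ (all functors in sight are additive and the sums are the same indexing sets); the projection formula $p_*(N\otimes p^*V)\simeq (p_*N)\otimes V$ for $N\in D(X\times S)$, $V\in D(S)$; and associativity/commutativity of $\overset{\mathbf L}\otimes$.

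First I would treat $R_0$. For a single summand,
\[
R_\cE(M\otimes p^*V)=f^*\cE\otimes p^*p_*(f^*\cE^*\otimes M\otimes p^*V).
\]
Moving $p^*V$ out of $p_*$ by the projection formula gives $f^*\cE\otimes p^*\bigl((p_*(f^*\cE^*\otimes M))\otimes V\bigr)\simeq f^*\cE\otimes p^*p_*(f^*\cE^*\otimes M)\otimes p^*V = R_\cE(M)\otimes p^*V$, using that $p^*$ is monoidal. Summing over $\cE$ yields $R_0(M\otimes p^*V)\simeq R_0(M)\otimes p^*V$, naturally in $M$. Iterating once more — or directly from the explicit description \eqref{R_1}, where $p^*V$ again only interacts with the last factor $p^*p_*(f^*\cE_0^*\otimes M)$ — gives $R_1(M\otimes p^*V)\simeq R_1(M)\otimes p^*V$.

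Next I would check that these isomorphisms intertwine the two differentials $\di_0,\di_1$. For $\di_1=R_0(\rho_M)$ this is immediate from naturality of the $R_0$-isomorphism applied to the morphism $\rho_M\colon R_0(M)\to M$ (more precisely, to $\rho_{M\otimes p^*V}$, which under the identifications is $\rho_M\otimes\Id_{p^*V}$, because $\rho^\cE$ is built from the trace-type evaluation map $f^*\cE^*\otimes f^*\cE\to\cO$ which does not see the extra factor). For $\di_0=\rho_{R_0(M)}$ one argues the same way, now noting that the identification $R_0(M\otimes p^*V)\simeq R_0(M)\otimes p^*V$ carries $\rho_{R_0(M\otimes p^*V)}$ to $\rho_{R_0(M)}\otimes\Id$. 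Hence the isomorphism of the underlying pairs of maps is compatible with the differentials, so it is an isomorphism $R_\punt^{X\times S/S}(M\otimes p^*V)\overset\sim\to R_\punt^{X\times S/S}(M)\otimes p^*V$ in the category of "two parallel arrows in $D_\text{perf}(X\times S)^*$"; naturality in $M$ (and in $V$) is inherited from naturality of the projection formula.

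The main obstacle — really the only point requiring care rather than bookkeeping — is verifying that the canonical morphisms $\rho_M^\cE$ genuinely correspond to $\rho_M^\cE\otimes\Id_{p^*V}$ under the projection-formula identifications, i.e. that the coherence isomorphisms (projection formula, $p^*$ monoidal, associativity of $\otimes$) are mutually compatible so that the square relating $\rho_{M\otimes p^*V}$ and $\rho_M\otimes\Id$ commutes. This is a diagram chase with the standard compatibilities of the six-functor formalism and introduces no new ideas; once it is in place, the statement for $R_1$ and for both $\di_0,\di_1$ follows formally, and $S$-linearity of $R_\punt^{X\times S/S}$ is established.
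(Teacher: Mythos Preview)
Your proposal is correct and follows essentially the same approach as the paper: establish $R_\cE(M\otimes p^*V)\simeq R_\cE(M)\otimes p^*V$ via the projection formula, check compatibility with $\rho^\cE$, and then conclude for $R_0$ and $R_1$ by summing and iterating. If anything, you spell out the compatibility with the two differentials $\di_0,\di_1$ more explicitly than the paper, which simply writes ``Conclusion follows'' after the $R_0$ step; one minor slip is that $R_\punt$ lives in (an ind-completion of) $D(X\times S)$ rather than in $D_\text{perf}(X\times S)^*$, but this does not affect the argument.
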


\begin{proof} One has a natural isomorphism $R_\cE(M\otimes p^*V)\simeq R_\cE(M)\otimes
p^*V$. Indeed, by projection formula,

\[ \aligned R_\cE(M\otimes p^*V)=  f^*\cE \otimes p^*p_* (f^*\cE^*\otimes M\otimes
p^*V) & \simeq  f^*\cE \otimes p^*p_*  (f^*\cE^*\otimes M) \otimes
p^*V
\\ & = R_\cE(M)\otimes p^*V.\endaligned
\] One checks that the diagram
$$\xymatrix{  R_\cE (M\otimes p^*V)\ar[rr]^{\rho^\cE_{M\otimes p^*V}}\ar[d]^{\wr}
& &
M\otimes p^*V\ar[d]^{\Id}\\
R_\cE (M)\otimes p^*V \ar[rr]^{\rho^\cE_M\otimes 1}&  &  M\otimes
p^*V }$$ is commutative. Hence one has an isomorphism  $R_0
(M\otimes p^*V)\overset\sim\to R_0 (M)\otimes p^*V$ and a
commutative diagram

$$\xymatrix{  R_0 (M\otimes p^*V)\ar[rr]^{\rho_{M\otimes p^*V}}\ar[d]^{\wr}
& &
M\otimes p^*V\ar[d]^{\Id}\\
R_0 (M)\otimes p^*V \ar[rr]^{\rho_M\otimes 1}&  &  M\otimes p^*V
}$$ Conclusion follows.
\end{proof}

\begin{prop}\label{compatibility} One has a natural isomorphism $f_*[R_\punt^{X\times
S/S}(M) ]\simeq R_\punt^{X /k}(f_*M)$.
\end{prop}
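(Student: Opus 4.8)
The plan is to establish the isomorphism first on each building block $R_\cE$, as a natural transformation of functors, and then to propagate it through the (functorial) formation of $R_0$, $R_1$ and the differentials. Write $\pi_X\colon X\to\Spec k$ and $\pi_S\colon S\to\Spec k$ for the structure morphisms, so that $\pi_X\circ f=\pi_S\circ p$. Fix $\cE\in D_\text{perf}(X)$. Using the projection formula for $f$ to pull out $f^*\cE$, flat base change for the cartesian square with sides $p$, $f$, $\pi_X$, $\pi_S$ (legitimate since $\pi_X$ is flat, $k$ being a field), the identity $(\pi_S)_*p_*=(\pi_X)_*f_*$ (both equal pushforward along $X\times S\to\Spec k$), and the projection formula for $f$ once more, one computes
\begin{align*}
 f_*\bigl(f^*\cE\otimes p^*p_*(f^*\cE^*\otimes M)\bigr)
 &\simeq \cE\otimes f_*p^*p_*(f^*\cE^*\otimes M)\\
 &\simeq \cE\otimes \pi_X^*(\pi_S)_*p_*(f^*\cE^*\otimes M)\\
 &\simeq \cE\otimes \pi_X^*(\pi_X)_*f_*(f^*\cE^*\otimes M)\\
 &\simeq \cE\otimes \pi_X^*(\pi_X)_*(\cE^*\otimes f_*M)=R_\cE^{X/k}(f_*M).
\end{align*}
Each of these isomorphisms is natural in $M$ and is one of the standard canonical isomorphisms, so one obtains a natural isomorphism of functors $f_*\circ R_\cE^{X\times S/S}\simeq R_\cE^{X/k}\circ f_*$.

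The point requiring care is that this composite isomorphism carries $f_*(\rho_M^\cE)$ to $\rho_{f_*M}^\cE$: this is checked by tracing the definition of $\rho^\cE$ — the adjunction counit $p^*p_*\to\Id$ followed by the evaluation $f^*\cE\otimes f^*\cE^*\to\cO$ — through the chain above, using the compatibility of the projection formula with the evaluation map and of the base-change isomorphism with the adjunction counits (and with composition of pushforwards). Granting this, the rest is formal. Since $f$ is proper, $\bR f_*$ commutes with direct sums, so $f_*R_0(M)=\bigoplus_\cE f_*R_\cE(M)\simeq\bigoplus_\cE R_\cE(f_*M)=R_0(f_*M)$, compatibly with $\rho_M$ and $\rho_{f_*M}$. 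Each $R_{\cE_0}(M)$ lies in $D_\text{perf}(X\times S)$ (being $f^*\cE_0\otimes p^*V$ with $V=p_*(f^*\cE_0^*\otimes M)$ perfect on $S$ by the finite homological dimension of $M$ over $S$), so the functorial isomorphism above applies to it; iterating (as in Remark \ref{remark-resolution}) gives $f_*R_{\cE_1}R_{\cE_0}^{X\times S/S}(M)\simeq R_{\cE_1}^{X/k}\bigl(f_*R_{\cE_0}^{X\times S/S}(M)\bigr)\simeq R_{\cE_1}R_{\cE_0}^{X/k}(f_*M)$, and summing over $\cE_1,\cE_0$ yields $f_*R_1(M)\simeq R_1(f_*M)$. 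As $\di_0=\rho_{R_0(M)}$ and $\di_1=R_0(\rho_M)$ are built from $\rho$ and $R_0$, the isomorphisms just produced intertwine $f_*(\di_i)$ with $\di_i$, whence $f_*[R_\punt^{X\times S/S}(M)]\simeq R_\punt^{X/k}(f_*M)$, naturally in $M$.

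I expect the main obstacle to be precisely the verification flagged above: that the chain of projection-formula and flat-base-change isomorphisms is genuinely compatible with the (only implicitly defined) augmentation $\rho^\cE$, and consequently with the differentials $\di_0,\di_1$. The isomorphism on objects is the routine four-line computation above; the real content lies in the commutativity of the associated diagrams, which reduces to the standard compatibilities of the projection formula and of flat base change with units, counits and composition.
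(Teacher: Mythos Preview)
Your proof is correct and follows essentially the same approach as the paper's: establish $f_*R_\cE(M)\simeq R_\cE(f_*M)$ via the projection formula and flat base change, check compatibility with $\rho^\cE$, and then pass to $R_0$ and $R_1$. Your version is in fact more explicit than the paper's, which compresses the base-change computation into a single step and leaves both the commutation of $f_*$ with the infinite direct sum and the compatibility with $\rho^\cE$ as unproved assertions.
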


\begin{proof}
By projection formula and flat base change one has
$$\aligned f_*R_\cE(M)=f_*[f^*\cE  \otimes   p^*p_* (f^*\cE^*\otimes M)]
\simeq \cE \otimes f_* p^*p_* (f^*\cE^*\otimes M) &\simeq \cE
\otimes p^*p_* ( \cE^*\otimes f_*M)\\ & = R_\cE(f_*M)\endaligned$$
Moreover, the diagram
$$\xymatrix{ f_* R_\cE (M )\ar[rr]^{f_*(\rho^\cE_M)}\ar[d]^{\wr}
& &
M \ar[d]^{\Id}\\
R_\cE(f_*M)\otimes p^*V \ar[rr]^{\rho^\cE_{f_*M}}&  & M  }$$ is
commutative. One has then $f_*R_0(M)\simeq R_0(f_*M)$ and a
commutative diagram
$$\xymatrix{ f_* R_0 (M )\ar[rr]^{f_*(\rho_M)}\ar[d]^{\wr}
& &
M \ar[d]^{\Id}\\
R_0 (f_*M)\otimes p^*V \ar[rr]^{\rho_{f_*M}}&  & M  }$$ Conclusion
follows.
\end{proof}

\subsection{The extension theorem.}\hfill
\medskip

We need to introduce a relative notion of perfectness.
\begin{defn}\label{fhd} Let $f\colon Z\to T$ be a morphism of schemes. An
object $M\in D^b_c(Z)$ is said to be of finite homological
dimension over $T$ (fhd$/T$ for short), if $M\otimes f^*N$ is
bounded and coherent for any $N\in D^b_c(T)$. We shall denote by
$D_{\text{fhd}/T}(Z)$ the faithful subcategory of the objects of
finite homological dimension over $T$.
\end{defn}

The following properties of fhd-objects are quite immediate (see
\cite[Section 1.2]{HLS05}).

\begin{prop}\label{fhd-properties}\begin{enumerate}
\item If $f$ is flat, then $D_\text{\rm perf}(Z)\subset
D_{\text{\rm fhd}/T}(Z)$.
\item If $M$ is fhd over $T$  and $f$ is proper, then $f_*M$ is perfect.
\item If $M$ is fhd over $T$ and $\cE\in D_\text{\rm perf}(Z)$, then
$M\otimes\cE$ is fhd over $T$.
\end{enumerate}
\end{prop}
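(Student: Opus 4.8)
The plan is to derive all three statements from two standard facts about perfect complexes over Noetherian schemes (recall that all schemes here are proper over $k$, hence Noetherian): first, that tensoring with a perfect complex carries $D^b_c$ into $D^b_c$; and second, the local characterization that a complex $P\in D^b_c(T)$ is perfect if and only if it has finite Tor-amplitude, equivalently $P\otimes N$ is bounded for every $N\in D^b_c(T)$ (it even suffices to test this on the residue fields $k(t)$). Both are recorded in \cite[Section 1.2]{HLS05}, and I would invoke them rather than reprove them.

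For (1), since $f$ is flat, $f^*$ is exact and preserves boundedness and coherence, so $f^*N\in D^b_c(Z)$ for every $N\in D^b_c(T)$; as $M$ is perfect, $M\otimes f^*N\in D^b_c(Z)$ by the first fact, which is precisely the assertion that $M$ is fhd$/T$. For (3), given $N\in D^b_c(T)$ I would write $(M\otimes\cE)\otimes f^*N\simeq (M\otimes f^*N)\otimes\cE$; since $M$ is fhd$/T$ the term $M\otimes f^*N$ lies in $D^b_c(Z)$, and tensoring it with the perfect complex $\cE$ keeps it there by the first fact (the case $N=\cO_T$ also gives $M\otimes\cE\in D^b_c(Z)$), so $M\otimes\cE$ is fhd$/T$.

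For (2), note first that $f_*M\in D^b_c(T)$ because $f$ is proper and $M\in D^b_c(Z)$ (Grothendieck finiteness). To see that $f_*M$ is perfect I would, for an arbitrary $N\in D^b_c(T)$, apply the projection formula to get $f_*M\otimes N\simeq f_*(M\otimes f^*N)$; since $M$ is fhd$/T$ we have $M\otimes f^*N\in D^b_c(Z)$, and then $f_*(M\otimes f^*N)\in D^b_c(T)$ because $f$ is proper. Hence $f_*M\otimes N$ is bounded for every $N\in D^b_c(T)$, so $f_*M$ has finite Tor-amplitude and is therefore perfect by the criterion above. The only non-formal ingredient is this Tor-amplitude criterion; once it is granted (as in \cite{HLS05}), the proposition follows by bookkeeping with the projection formula and the exactness of flat pullback, so there is no real obstacle beyond citing the correct statement.
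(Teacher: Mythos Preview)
Your argument is correct and is precisely the standard proof; the paper itself gives no argument at all for this proposition, merely stating that the properties are ``quite immediate'' and referring to \cite[Section~1.2]{HLS05}. What you have written is essentially an unpacking of that reference: the key inputs are exactly the two facts you isolate (perfect complexes have finite Tor-amplitude and hence tensor $D^b_c$ into itself, and on a Noetherian scheme bounded coherent plus finite Tor-amplitude implies perfect), together with the projection formula and exactness of flat pullback. There is nothing to compare beyond noting that you have supplied the details the paper chose to omit.
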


Given two schemes $X$ and $S$, we shall denote by
$D_{\text{fhd}/X,S}(X\times S)$ the category of objects in
$D^b_c(X\times S)$ of finite homological dimension over both $X$
and $S$.

\begin{thm}\label{lifting} Let $p\colon X\to\Spec k$ and $q\colon Y\to\Spec k$ be two proper $k$-schemes.
Let $F\colon D_\text{\rm perf}(X)\to D_\text{\rm perf}(Y)^*$ be a
$k$-linear and graded  functor. For any proper $k$-scheme $f\colon
S\to\Spec k$ there exists a functor
\[ F_S\colon D_{\text{\rm fhd}/X,S}(X\times S)\to D_\text{\rm perf}(Y\times S)^*\] such that:

1) $F_S$ is $S$-linear: one has a bi-functorial isomorphism
$F_S(M\otimes p^*V)\simeq F_S(M)\otimes q^*V$ for any $M\in
D_{\text{\rm fhd}/X,S}(X\times S)$, $V\in D_\text{\rm perf}( S)$.

2) It is compatible with $F$: for any $M\in D_{\text{\rm
fhd}/X,S}(X\times S)$ one has a natural isomorphism
$f_*F_S(M)\simeq F(f_*M)$.

3) $F_S$ is functorial on $F$.

\end{thm}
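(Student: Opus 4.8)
The plan is to define $F_S(M)$ by descent from the ``perfect resolution'' $R_\punt^{X\times S/S}(M)$ constructed in Section~\ref{section-resolution}, since on objects of the form $f^*L$ with $L\in D_\text{perf}(X)$ there is an obvious candidate, namely $q^*(-)$ applied to $F(L)$. More precisely, for $\cE\in D_\text{perf}(X)$ one has $R_\cE(M)=f^*\cE\otimes p^*p_*(f^*\cE^*\otimes M)$, and by Proposition~\ref{fhd-properties}(2) the object $V_\cE:=p_*(f^*\cE^*\otimes M)$ lies in $D_\text{perf}(S)$ (note $f^*\cE^*\otimes M$ is fhd$/S$ by Proposition~\ref{fhd-properties}(3) and $p\colon X\times S\to S$ is proper). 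So $R_\cE(M)=f^*\cE\otimes p^*V_\cE$, and the only sensible definition is
\[ F_S(R_\cE(M)):=F(\cE)\otimes q^*V_\cE\in D_\text{perf}(Y\times S)^*,\]
where on the right $\otimes q^*V_\cE$ is the module structure of $D_\text{perf}(Y\times S)^*$ over $D_\text{perf}(Y\times S)$ via $q^*$, and $F(\cE)$ is viewed in $D_\text{perf}(Y\times S)^*$ by pulling back along $Y\times S\to Y$ (using the flat-pullback operation on linear forms). Extending additively over $\cE$, and then over pairs $(\cE_1,\cE_0)$ using the explicit formula \eqref{R_1}, we get linear forms $F_S(R_0(M))$ and $F_S(R_1(M))$ together with two differentials $\di_0,\di_1$ induced by $F$ applied to the structure maps $\rho^{\cE_1}_{R_{\cE_0}(M)}$ and $R_{\cE_1}(\rho^{\cE_0}_M)$ — here one uses that every map appearing among the $\di_i$ is, after the identifications above, of the form $F(\text{something in }D_\text{perf}(X))$ tensored with a map of perfect complexes on $S$, so $F$ can indeed be applied. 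Then set
\[ F_S(M):=\Coker\big(F_S(R_1(M))\rightrightarrows F_S(R_0(M))\big)\]
using the cokernel-of-linear-forms construction of the previous subsection.

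The verification then proceeds in the following order. First, functoriality of $F_S$ in $M$: the resolution $R_\punt^{X\times S/S}(M)$ is functorial in $M$ (stated in Section~\ref{section-resolution}), each $R_\cE$ and $R_{\cE_1}R_{\cE_0}$ is functorial, and $\Coker$ is functorial, so $F_S$ is a functor on $D_{\text{fhd}/X,S}(X\times S)$. Second, property (2), compatibility with $F$: apply $f_*$ to $F_S(M)$; since $f_*$ commutes with $\Coker$ of linear forms (Proposition after the definition of $\Coker$, part (2)) and one checks $f_*(F(\cE)\otimes q^*V_\cE)\simeq F(\cE)\otimes V_\cE=F(\cE\otimes V_\cE)$ using $k$-linearity/gradedness of $F$ and the relation $f_*q^*=\Id$ on linear forms (indeed $q_*f^*=\Id$ on $D_\text{perf}(k)\to D_\text{perf}(Y)$... more precisely one uses $f_*\circ q^* = $ the corresponding operation downstairs), one reduces to showing $F(f_*R_1(M))\rightrightarrows F(f_*R_0(M))$ has cokernel $F(f_*M)$. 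But $f_*R_\punt^{X\times S/S}(M)\simeq R_\punt^{X/k}(f_*M)$ by Proposition~\ref{compatibility}, and $f_*M\in D_\text{perf}(X)$, so $R_\punt^{X/k}(f_*M)\to f_*M$ is a resolution in the sense of Remark~\ref{remark-resolution} applied to $G=F$ (here crucially $F$ is merely additive, which is all Remark~\ref{remark-resolution} requires); hence $\Coker(F(R_1(f_*M))\rightrightarrows F(R_0(f_*M)))=F(f_*M)$. Third, property (1), $S$-linearity: by Proposition~\ref{linearity} one has $R_\punt^{X\times S/S}(M\otimes p^*V)\simeq R_\punt^{X\times S/S}(M)\otimes p^*V$, and on each summand $R_\cE(M)\otimes p^*V=f^*\cE\otimes p^*(V_\cE\otimes V)$, so $F_S$ sends it to $F(\cE)\otimes q^*(V_\cE\otimes V)=\big(F(\cE)\otimes q^*V_\cE\big)\otimes q^*V$; since $\Coker$ commutes with $\otimes q^*V$ (the $\Coker$-proposition, part (1)), we get $F_S(M\otimes p^*V)\simeq F_S(M)\otimes q^*V$, bi-functorially. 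Fourth, property (3), functoriality in $F$: a morphism $F\to F'$ induces compatible morphisms on $F_S(R_i(M))$ for all $\cE$ (and $(\cE_1,\cE_0)$), commuting with the $\di_i$, hence a morphism of cokernels, yielding $F_S\to F'_S$ natural in $F$.

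I should also record two well-definedness points before the above works smoothly. (a) One must check $F_S(M)$ really lands in $D_\text{perf}(Y\times S)^*$, i.e. is a $k$-linear graded functor $D_\text{perf}(Y\times S)\to D(k)$: each $F(\cE)\otimes q^*V_\cE$ is such, infinite direct sums of linear forms are linear forms (the target $D(k)$ has arbitrary direct sums), and $\Coker$ of linear forms is a linear form by construction — so this is automatic. (b) One must check the differentials $\di_0,\di_1$ on $F_S(R_1(M))\to F_S(R_0(M))$ are well-defined, which amounts to seeing that the structure maps of the resolution, after the identifications $R_\cE(M)=f^*\cE\otimes p^*V_\cE$, are built from maps in the image of $f^*\colon D_\text{perf}(X)\to D_\text{perf}(X\times S)$ tensored with maps of perfect complexes pulled back from $S$; this is clear from the explicit description of $\di_0=\rho_{R_0(M)}$, $\di_1=R_0(\rho_M)$ and formula \eqref{R_1}. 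The main obstacle, and the step deserving the most care, is exactly this bookkeeping in property (2): getting the identification $f_*\circ(\text{the }q^*\text{-module operation on }D_\text{perf}(Y\times S)^*)$ to match the ordinary $k$-module operation on $D_\text{perf}(Y)^*$ compatibly with $F$, and then threading Proposition~\ref{compatibility} through the $\Coker$. Everything else is the routine propagation of the three properties of $R_\punt^{X\times S/S}$ (functoriality, $S$-linearity via Proposition~\ref{linearity}, direct-image compatibility via Proposition~\ref{compatibility}) through the two exact operations ($\oplus$ and $\Coker$) that define $F_S$, together with the fact that all of these operations, unlike $F$ itself, need $F$ to be no more than additive.
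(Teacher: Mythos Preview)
Your proposal is correct and follows essentially the same route as the paper: build $F_S(M)$ as the cokernel of $\tilde F$ applied to the two-term perfect resolution $R_\punt^{X\times S/S}(M)$, then deduce (1), (2), (3) from the corresponding properties of $R_\punt$ (Propositions~\ref{linearity} and~\ref{compatibility}) together with Remark~\ref{remark-resolution}. The only noteworthy difference is how you produce the differentials $\tilde\di_0,\tilde\di_1$: you argue that each component of $\di_0$ and $\di_1$ is already a pure tensor $f^*(\text{map in }D_\text{perf}(X))\otimes p^*(\text{map in }D_\text{perf}(S))$, so $F$ can be applied factor-wise, whereas the paper isolates a slightly more general Lemma~\ref{lemita} converting an arbitrary morphism $p^*V\otimes f^*\cE\to p^*V'\otimes f^*\cE'$ into one on the $Y\times S$ side via the adjunction $h\mapsto\bar h\colon\cE\to\cE'\otimes_k\bR\dHom{}(V,V')$ and the $k$-linearity of $F$. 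Your shortcut is legitimate here (both $\di_0$ and $\di_1$ really do split this way on each summand), and the paper's lemma buys a cleaner check of compatibility with composition and of functoriality in $M$; but the two arguments are the same in substance.
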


\begin{proof} Let $M\in D_{\text{fhd}/X,S}(X\times S)$. Let us take the
``perfect'' resolution of $M$, $R_\punt^{X\times S/S}(M)$,
constructed in section \ref{section-resolution}. Recall that
$R_\cE(M)=
 f^*\cE \otimes p^*p_*(f^*\cE^*\otimes M)$. Since $M$ is fhd over
$S$, $R_\cE(M)$ belongs to $p^*D_\text{perf}(S)\otimes
f^*D_\text{perf}(X)$. In particular, $R_\cE(M)$ is fhd over $S$
and then $R_{\cE_1}R_{\cE_0}(M)$ belongs also to
$p^*D_\text{perf}(S)\otimes f^*D_\text{perf}(X)$.

Taking in mind the explicit expression of $R_\cE(M)$ and
$R_{\cE_1}R_{\cE_0} (M)$ (see \eqref{R_1}), let us put
$$\aligned \tilde F (R_\cE(M))&: = f^*F(\cE )\otimes
q^*p_*(f^* \cE^*\otimes M)\\ \tilde F (R_{\cE_1} R_{\cE_0}(M))&: =
f^*F(\cE_1)\otimes q^*p_*f^*(\cE_1^*\otimes \cE_0)\otimes
q^*p_*(f^*\cE_0^*\otimes M)  \endaligned$$ and $$\aligned \tilde
F(R_0(M)) &:= \underset{\cE \in D_\text{perf}(X)}\oplus \tilde F ( R_\cE (M))\\
\tilde F (R_1(M))&:= \underset{\cE_1, \cE_0\in
D_\text{perf}(X)}\oplus \tilde F ( R_{\cE_1}
R_{\cE_0}(M))\endaligned$$ By Lemma \ref{lemita} (see below) the
morphisms $\di_0,\di_1\colon R_1(M)\to R_0(M)$ induce   morphisms
$$\tilde\di_0,\tilde\di_1\colon \tilde F (R_1(M))\to \tilde F
(R_0(M))$$ in $D_\text{perf}(Y\times S)^*$ which are functorial on
$M$. Finally, we define
\[ F_S(M):= \Coker (\tilde F(R_\punt(M))\] It is clear that
$F_S(M)$ is functorial on $M$, hence we obtain  a functor
$F_S\colon D_\text{perf}(X\times S)\to D_\text{perf}(Y\times
S)^*$. By construction, $F_S$ satisfies 3).

\begin{lem}\label{lemita} Let $V,V'\in D_\text{\rm perf}(S)$ and $\cE,\cE'\in D_\text{\rm perf}(X)$.
One has a natural map
\[\Hom_{D_\text{\rm perf}(X\times S)} (p^*V\otimes f^*\cE,p^*V'\otimes f^*\cE')\to
\Hom_{D_\text{\rm perf}(Y\times S)^*}
 ( q^*V\otimes f^*F(\cE), q^*V'\otimes
f^*F(\cE'))\] This map is compatible with composition; moreover,
it is $S$-linear and extends $F$ (the precise meaning of these
will be given in the proof).
\end{lem}

\begin{proof}

A morphism $h\colon p^*V\otimes f^*\cE\to p^*V'\otimes f^*\cE'$
corresponds with a morphism $\bar h\colon \cE\to \cE'\otimes_k
\bR\dHom{}(V,V')$. Since $F$ is $k$-linear and graded, it  induces
a morphism $F(\bar h)\colon F(\cE)\to F(\cE')\otimes_k
\bR\dHom{}(V,V')$. Hence, for any $N\in D_\text{perf}(Y\times S)$
one has morphisms
\[\aligned   (q^*V&\otimes f^*F(\cE))(N) =
F(\cE)(f_*(q^*V\otimes N))\overset{F(\bar h)} \to
[F(\cE')\otimes_k \bR\dHom{}(V,V')] (f_*(q^*V\otimes N)) \\ &=
F(\cE') (f_*(q^*V\otimes N)\otimes_k \bR\dHom{}(V,V'))
\overset{(*)}\to   F(\cE')(f_*(q^*V\otimes N))\\ &= (q^*V'\otimes
f^*F(\cE'))(N) \endaligned\] where (*) is the morphism induced by
the natural evaluation map $f_*(q^*V\otimes N)\otimes_k
\bR\dHom{}(V,V')\to f_*(q^*V'\otimes N)$. That is, one obtains a
morphism $\tilde h\colon q^*V\otimes f^*F(\cE)\to q^*V'\otimes
f^*F(\cE')$.

One can check from the construction that $\widetilde{(f\circ
g)}=\tilde f\circ \tilde g$, for any $f\colon p^*V'\otimes
f^*\cE'\to p^*V''\otimes f^*\cE''$ and $g\colon p^*V\otimes
f^*\cE\to p^*V'\otimes f^*\cE'$.  Moreover, if $f=p^*(f_1)\otimes
f_2$ for some $f_1\colon V\to V'$ and $f_2\colon f^*\cE\to
f^*\cE'$, then  $\tilde f=q^*(f_1)\otimes \tilde {f_2}$. Finally,
if $V=V'$ and $f=\Id\otimes f^*(f_2)$ for some $f_2\colon
 \cE\to \cE'$, then $\tilde f=\Id\otimes f^*F( f_2)$.\end{proof}

To conclude the proof of Theorem \ref{lifting}, we have to prove
that $F_S$ satisfies 1) and 2).

\begin{prop}\label{tilde} One has natural isomorphisms:

a) $\tilde F(R_\punt (M))\otimes q^*N  \simeq \tilde F(R_\punt  (M
\otimes p^*N))$ and

b) $f_* \tilde F( R_\punt  (M))\simeq F(f_* R_\punt (M)) $ (see
Remark \ref{remark-resolution} for the definition of $F(f_*
R_\punt (M))$).
\end{prop}

\begin{proof} a) Completely analogous arguments to that of
Proposition \ref{linearity} yield isomorphisms $ \tilde F (R_\cE(M
\otimes p^* V) \simeq  \tilde F (R_\cE(M)) \otimes q^* V$ and $
\tilde F (R_{\cE_1}R_{\cE_0}(M  \otimes p^* V)) \simeq  \tilde F
(R_{\cE_1}R_{\cE_0}(M)) \otimes q^* V$. One checks that these
isomorphisms are compatible with the differentials.

b) Completely analogous arguments to that of Proposition
 \ref{compatibility} yield isomorphisms $f_* \tilde F( R_\cE  (M))\simeq F(f_*
R_\cE  (M))$ and $f_* \tilde F( R_{\cE_1}R_{\cE_0} (M))\simeq
F(f_* R_{\cE_1}R_{\cE_0} (M))$. Again, one checks that these
isomorphisms are compatible with the differentials.
\end{proof}

It follows immediately that $F_S$ satisfies 1). For 2), one has
\[\aligned  f_*F_S(M)=f_*\Coker(\tilde F(R_\punt (M)))= \Coker(f_*\tilde F(R_\punt
(M))) & \overset{\ref{tilde}} \simeq \Coker F(f_*R_\punt (M))\\
& \overset{\ref{compatibility}}\simeq \Coker F(R_\punt (f_*
M))\endaligned
\] Finally, $\Coker F(R_\punt (f_* M)) \simeq F(f_*M)$ by
Proposition \ref{resolution} and Remark \ref{remark-resolution}.

This concludes the proof of Theorem \ref{lifting}.
\end{proof}

\begin{rem} The lifting $F_S$ of $F$ is functorial but it is not
unique. Let us show an  alternative lifting $F'_S$. Instead of
considering the ``resolution'' $R_1(M)\overset{\dsize\to}\to
R_0(M)$, let us consider the complex of objects in $D(X\times S)$
\[ R_\punt(M):= \{ \cdots \to R_n(M)\to R_{n-1}(M)\to \cdots \to R_1(M)\to R_0(M)\}\]
where $R_n(M)=R_0(R_{n-1}(M))$ and the differential $R_n(M)\to
R_{n-1}(M)$ is the alternate sum of the $n+1$ natural  maps from
$R_n(M)$ to $R_{n-1}(M)$. As in the proof of the theorem, we can
define $\tilde F (R_\punt(M))$, which is a complex of objects in
$D_\text{perf}(Y\times S)^*$. Then one defines $F'_S(M)$ as the
``simple complex'' associated to $\tilde F (R_\punt(M))$, i.e.,
for any $N\in D_\text{perf}(Y\times S)$ we define $F'_S(M)(N)$ as
the simple complex associated to the complex of vector spaces
\[ \cdots \to H(\tilde F(R_n(M))(N))\to H(\tilde
F(R_n(M))(N))\to\cdots\to H(\tilde F(R_0(M))(N))\] This functor
$F'_S$ also satisfies properties 1), 2) and 3). Moreover, it has
an extra ``exact'' property: first notice that $F'_S(M)$ is in
fact a functor from $D_\text{perf}(Y\times S)$ to the category of
complexes of vector spaces (i.e, if $h\colon N\to N'$ is a
morphism in $D_\text{perf}(Y\times S)$, then $F'_S(M)(h)$ is a
morphism  of complexes); the exact property is the following: if
$N_1\to N_2\to N_3$ is an exact triangle in $D_\text{perf}(Y\times
S)$, then
\[ F'_S(M)(N_1)\to F'_S(M)(N_2)\to F'_S(M)(N_3)\] is an exact
sequence of complexes (but may be not an exact triangle).
\end{rem}

Let us see now how the extension theorem yields the integrality
theorem.

\begin{thm}\label{integral1} Let $X$ and $Y$ be two proper $k$-schemes and $F\colon D_\text{\rm perf}(X)
\to D_\text{\rm perf}(Y)^*$   a $k$-linear graded functor. Then
there exists an  object $\omega$ in $D_\text{\rm perf}(X\times
Y)^*$ such that $F\simeq \Phi_\omega$.
\end{thm}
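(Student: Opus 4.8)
The plan is to apply the extension theorem (Theorem \ref{lifting}) with $S=Y$. This produces a $Y$-linear functor $F_Y\colon D_{\text{fhd}/X,Y}(X\times Y)\to D_\text{perf}(Y\times Y)^*$, compatible with $F$ in the sense that $\pi_*F_Y(M)\simeq F(\pi_*M)$ for the projection $\pi\colon X\times Y\to X$, and with $F_Y(M\otimes \pi_Y^*V)\simeq F_Y(M)\otimes \delta^*V$ where $\pi_Y\colon X\times Y\to Y$ and $\delta$ records the second $Y$-factor. The idea is then to feed into $F_Y$ the structure sheaf of the diagonal $\Delta\subset Y\times Y$ sitting over $X\times Y$; more precisely, to apply $F_Y$ to an object built from $\cO_\Delta$ and then push forward along $X\times Y\times Y\to X\times Y$ to land in $D_\text{perf}(X\times Y)^*$.

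First I would make this precise. Consider $X\times Y\times Y$ with its three projections; write $\delta\colon X\times Y\to X\times Y\times Y$ for the diagonal embedding on the last two factors (so $\delta_*\cO_{X\times Y}$ plays the role of the relative diagonal over $X$), and let $\pi\colon X\times Y\times Y\to X\times Y$ be the projection forgetting the second $Y$. The object $\delta_*\cO_{X\times Y}\in D^b_c(X\times Y\times Y)$ is of finite homological dimension over both $X$ and the first $Y$-factor, since $\delta$ is a regular closed immersion and the composite with these projections is flat; hence it lies in $D_{\text{fhd}/X,Y}(X\times Y\times Y)$ after the appropriate identification of $X\times Y\times Y$ as $(X\times Y)\times Y$, with the $Y$ playing the role of the base $S$ in Theorem \ref{lifting}. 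Now set
\[
\omega := \pi_* F_Y(\delta_*\cO_{X\times Y}) \ \in\ D_\text{perf}(X\times Y)^*,
\]
using that $F_Y$ takes values in $D_\text{perf}$ of the relevant product and that $\pi_*$ on linear forms is defined via pullback (Definition of direct image for linear forms), so the target is indeed $D_\text{perf}(X\times Y)^*$ once one checks that $\pi$ restricted appropriately is proper and flat so that $\pi_*$ makes sense at the level of forms.

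Next I would identify $\Phi_\omega$ with $F$. Unwinding the definition $\Phi_\omega(N)=p_*(\omega\otimes q^*N)$ for $N\in D_\text{perf}(X)$, one uses $S$-linearity (property 1) to pull the $q^*N$ inside $F_Y$ up to the base-change identifications, and then the compatibility property (property 2), i.e. $f_*F_Y(-)\simeq F(f_*(-))$, to collapse the $X$-direction. The key geometric input is the projection/base-change computation that $\delta_*\cO_{X\times Y}$, tensored with the pullback of $N$ from $X$ and then pushed down along the two $Y$-factors, reconstructs $N$ itself on the diagonal; this is the standard ``$\cO_\Delta$ is the kernel of the identity'' fact, here transported through the lift $F_Y$. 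Carrying out these manipulations should yield a chain of natural isomorphisms $\Phi_\omega(N)\simeq F(N)$, functorial in $N$, hence $F\simeq\Phi_\omega$.

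The main obstacle I anticipate is bookkeeping rather than conceptual: one must check that $\delta_*\cO_{X\times Y}$ genuinely meets the fhd hypotheses on \emph{both} sides with the correct roles of base and fiber, that $\pi_*$ of a perfect-valued linear form is again perfect-valued (this needs properness and is where Proposition \ref{fhd-properties}(2) enters), and that the isomorphisms furnished by properties 1) and 2) of Theorem \ref{lifting} are compatible in the way required to assemble them into a single natural isomorphism $\Phi_\omega\simeq F$. In other words, the hard part is verifying that the formal ``diagonal trick'' survives intact once $F_Y$ is inserted, i.e. that all the base-change and projection-formula isomorphisms used are the ones built into $F_Y$ by the construction in the proof of Theorem \ref{lifting}, and not extra structure one would need to supply by hand.
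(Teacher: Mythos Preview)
Your proposal has a type error that cannot be repaired as written. You apply Theorem \ref{lifting} with $S=Y$, which yields
\[
F_Y\colon D_{\text{fhd}/X,Y}(X\times Y)\longrightarrow D_\text{perf}(Y\times Y)^*,
\]
and then attempt to evaluate $F_Y$ on $\delta_*\cO_{X\times Y}\in D^b_c(X\times Y\times Y)$. But $F_Y$ is only defined on $X\times Y$; the extension theorem enlarges the source from $X$ to $X\times S$, and with $S=Y$ that is $X\times Y$, not $X\times Y\times Y$. No ``appropriate identification'' changes this: to have a domain $X\times Y\times Y$ you would need $S=Y\times Y$, and then the target becomes $D_\text{perf}(Y\times Y\times Y)^*$, leaving you no closer to $D_\text{perf}(X\times Y)^*$. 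Attempts to collapse the extra $Y$ via the diagonal $\Delta_Y\subset Y\times Y$ run into the problem that $\cO_{\Delta_Y}$ is not perfect on $Y\times Y$ unless $Y$ is smooth, so one cannot evaluate a linear form on it nor pull linear forms back along $\Delta$.

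The paper's proof chooses $S=X$ instead, and then everything lines up with no extra pushforward. One gets $F_X\colon D_{\text{fhd}/X,X}(X\times X)\to D_\text{perf}(Y\times X)^*$, feeds in the diagonal $\cO_\Delta\in D^b_c(X\times X)$ (which is fhd over both factors since $f_*(\cO_\Delta\otimes p^*N)\simeq N$), and sets $\omega:=F_X(\cO_\Delta)\in D_\text{perf}(Y\times X)^*=D_\text{perf}(X\times Y)^*$. The verification is three lines:
\[
F(M)\simeq F\bigl(f_*(\cO_\Delta\otimes p^*M)\bigr)\overset{(2)}{\simeq} f_*F_X(\cO_\Delta\otimes p^*M)\overset{(1)}{\simeq} f_*\bigl(F_X(\cO_\Delta)\otimes q^*M\bigr)=\Phi_\omega(M),
\]
using the diagonal identity $f_*(\cO_\Delta\otimes p^*M)\simeq M$, then compatibility (property 2), then $S$-linearity (property 1) with $S=X$. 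The moral is that the diagonal trick must be played on the \emph{source} side: the base $S$ in the extension theorem should be $X$, not $Y$.
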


\begin{proof} Let $F_S\colon D_{\text{fhd}/X,S}(X\times S)\to D_\text{perf}(Y\times
S)^*$ be the $S$-linear functor given by Theorem \ref{lifting}.
Take $S=X$, $f=p$, $\delta\colon X\to X\times S$ the diagonal map
and $\cO_\Delta=\delta_*\cO_X$. Notice that $\cO_\Delta$ is fhd
over both $X$ and $S$. Then, by properties 1) and 2) of $F_S$,
\[ F(M)\simeq  F(f_*( \cO_\Delta \otimes p^*M)) \simeq f_*F_S(
\cO_\Delta \otimes p^*M)\simeq f_* (F_S(\cO_\Delta)\otimes q^*M)\]
So it is enough to take $\omega=F_S(\cO_\Delta)$.
\end{proof}

Since $F_X$ is functorial on $F$ we obtain:

\begin{cor} One has a functor
\[\aligned \Psi\colon \bHom_k (D_\text{\rm perf}(X),D_\text{\rm perf}(Y)^*)  &\to
D_\text{\rm perf}(X\times Y)^*\\ F&\mapsto
F_X(\cO_\Delta)\endaligned  \] and the composition $\Phi\circ\Psi$
is isomorphic to the identity.
\end{cor}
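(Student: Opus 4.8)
The plan is to build $\Psi$ as the composite of two functorial constructions: first $F\mapsto F_X$, the $X$-linear lift supplied by Theorem~\ref{lifting} applied with $S=X$ (as in the proof of Theorem~\ref{integral1}), and then evaluation at the kernel $\cO_\Delta=\delta_*\cO_X$, where $\delta\colon X\to X\times X$ is the diagonal. On objects this gives $\Psi(F)=F_X(\cO_\Delta)$, the kernel already produced in the proof of Theorem~\ref{integral1}. To see that $\Psi$ is a functor I would use property~3) of Theorem~\ref{lifting} in the strong form that $F\mapsto F_S$ is itself functorial: a ($k$-linear, graded) natural transformation $\eta\colon F\to G$ induces a natural transformation $\eta_S\colon F_S\to G_S$ of functors $D_{\text{fhd}/X,S}(X\times S)\to D_\text{perf}(Y\times S)^*$, compatibly with identities and composition. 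This is immediate from the construction $F_S(M)=\Coker(\tilde F(R_\punt(M)))$: each term $\tilde F(R_\cE(M))$ and $\tilde F(R_{\cE_1}R_{\cE_0}(M))$ is visibly functorial in $F$ (by Lemma~\ref{lemita}, which sends morphisms functorially and extends $F$), the differentials $\tilde\di_0,\tilde\di_1$ are natural, and $\Coker$ is functorial. Taking $S=X$ and evaluating $\eta_X$ at $\cO_\Delta$ yields $\Psi(\eta)\colon F_X(\cO_\Delta)\to G_X(\cO_\Delta)$ in $D_\text{perf}(X\times Y)^*$ (identifying $Y\times X$ with $X\times Y$ via the transposition); compatibility with composition and identities, and $k$-linearity and gradedness of $\Psi$, then follow formally.

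For the isomorphism $\Phi\circ\Psi\simeq\Id$ I would revisit the chain in the proof of Theorem~\ref{integral1}: for $M\in D_\text{perf}(X)$,
\[ F(M)\overset{(a)}{\simeq} F\bigl(f_*(\cO_\Delta\otimes p^*M)\bigr)\overset{(b)}{\simeq} f_*F_X(\cO_\Delta\otimes p^*M)\overset{(c)}{\simeq} f_*\bigl(F_X(\cO_\Delta)\otimes q^*M\bigr)=\Phi_{\Psi(F)}(M), \]
where (a) is $F$ applied to the canonical identification $f_*(\cO_\Delta\otimes p^*M)\simeq M$ (projection formula together with $p\delta=\Id$ and $f\delta=\Id$), (b) is property~2) of $F_X$, and (c) is property~1). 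Each of (a), (b), (c) is natural in $M$, so for fixed $F$ they compose to an isomorphism of functors $\theta_F\colon F\overset\sim\to\Phi_{\Psi(F)}$. The content of the corollary beyond Theorem~\ref{integral1} is that $\theta_F$ is also natural in $F$, i.e. for $\eta\colon F\to G$ the square with verticals $\eta$, $\Phi_{\Psi(\eta)}$ and horizontals $\theta_F,\theta_G$ commutes. I would check this one step at a time: (a) does not involve $F$, so commutation reduces to the naturality of $\eta$; (b) commutes because the isomorphism $f_*F_X(-)\simeq F(f_*-)$ witnessing property~2), built via Proposition~\ref{tilde}(b) and Proposition~\ref{compatibility} from the functorial construction of $F_X$, is compatible with $\eta_X$; and (c) commutes because the $S$-linearity isomorphism of Proposition~\ref{tilde}(a) is assembled termwise out of the canonical projection-formula isomorphisms of Proposition~\ref{linearity}, which do not see $F$. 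Assembling $\theta_F$ over all $F$ then gives the desired natural isomorphism $\Id\overset\sim\to\Phi\circ\Psi$ of endofunctors of $\bHom_k(D_\text{perf}(X),D_\text{perf}(Y)^*)$.

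The main obstacle I anticipate is purely organizational: Propositions~\ref{linearity}, \ref{compatibility} and~\ref{tilde} were stated for a fixed functor, so the real task is to observe that every isomorphism occurring in them is assembled from projection-formula and flat-base-change isomorphisms independent of $F$, together with the tautological functoriality of $\tilde F$ in $F$ recorded in Lemma~\ref{lemita}. Granting this observation, the naturality of $\theta_F$ in $F$ is a sequence of routine diagram chases, and no new idea is needed beyond property~3) of Theorem~\ref{lifting}.
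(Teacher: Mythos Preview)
Your proposal is correct and follows exactly the paper's approach: the paper simply remarks ``Since $F_X$ is functorial on $F$ we obtain'' the corollary, relying on property~3) of Theorem~\ref{lifting} together with the chain of isomorphisms already displayed in the proof of Theorem~\ref{integral1}. You have spelled out in detail the naturality-in-$F$ verification that the paper leaves implicit, but there is no difference in strategy.
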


\subsection{Exactness.}

\begin{defn} A linear form, $\omega\colon D_\text{perf}(X\times Y)\to D(k)$, on $X\times
Y$ is said to be exact and perfect on $Y$  if for any $M\in
D_\text{perf}(X)$, the functor $\Phi_\omega(M)\colon
D_\text{perf}(Y)\to D(k)$ is exact and perfect, i. e.,
$\Phi_\omega(M)\in D(Y)^\vee$.
\end{defn}

We shall denote $D(X\times Y)^{Y-\vee}$ the full subcategory of
$D(X\times Y)^*$ whose objects are the  linear forms on $X\times
Y$ which are exact and perfect on $Y$.

Taking into account that $D^b_c(Y)\to D_\text{perf}(Y)^\vee$ is an
equivalence ($Y$ projective), we obtain

\begin{cor} If $Y$ is projective, one has functors

\[ \aligned \Phi\colon D_\text{\rm perf}(X\times Y)^{Y-\vee} &\to \bHom_k (D_\text{\rm perf}(X),D^b_c(Y) )  \\
\omega &\mapsto \Phi_\omega
\endaligned\] and \[
\aligned \Psi\colon \bHom_k (D_\text{\rm perf}(X),D^b_c(Y)) &\to D_\text{\rm perf}(X\times Y)^{Y-\vee}\\
F&\mapsto F_X(\cO_\Delta)\endaligned  \] and the composition
$\Psi\circ\Phi$ is isomorphic to the identity.
\end{cor}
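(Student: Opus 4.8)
The plan is to deduce the statement from the preceding Corollary by upgrading $\Phi$, on the subcategory of forms that are exact and perfect on $Y$, from a functor with a right inverse to an equivalence. First I would record that $\Phi$ and $\Psi$ take values in the stated subcategories: for $\Phi$ this is the definition of $D_\text{perf}(X\times Y)^{Y-\vee}$ together with the equivalence $D^b_c(Y)\simeq D_\text{perf}(Y)^\vee$ of Proposition \ref{Fourier}; for $\Psi$, given $F\colon D_\text{perf}(X)\to D^b_c(Y)$, the preceding Corollary gives $\Phi_{\Psi(F)}\simeq\Phi\circ\Psi(F)\simeq F$, and since $F$ is valued in $D^b_c(Y)\simeq D_\text{perf}(Y)^\vee$ the kernel $\Psi(F)$ is exact and perfect on $Y$. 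The same Corollary gives $\Phi\circ\Psi\simeq\mathrm{Id}$, so for every $\omega\in D_\text{perf}(X\times Y)^{Y-\vee}$ one has $\Phi(\Psi\Phi(\omega))\simeq\Phi(\omega)$.

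Next I would make $\Psi\circ\Phi$ explicit. Unwinding $\Psi\circ\Phi(\omega)=(\Phi_\omega)_X(\cO_\Delta)$ from the construction of the lift in Theorem \ref{lifting}: over $S=X$ one has $p_*(f^*\cE^*\otimes\cO_\Delta)\simeq\cE^*$ for the two projections $f,p\colon X\times X\to X$, and feeding this through $\widetilde{\Phi_\omega}$ and using the projection formula and flat base change identifies $\Psi\circ\Phi(\omega)$ with the form $N\mapsto\Coker\bigl(\omega(R_\punt^{X\times Y/Y}(N))\bigr)$, where $R_\punt^{X\times Y/Y}$ is the perfect resolution of Section \ref{section-resolution} taken over $S=Y$ and the augmentations $\rho_N$ induce a natural map $\epsilon_\omega(N)\colon\Coker\bigl(\omega(R_\punt^{X\times Y/Y}(N))\bigr)\to\omega(N)$. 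Thus the whole statement reduces to showing that $\epsilon_\omega$ is an isomorphism for every $\omega$ that is exact and perfect on $Y$.

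The terms $R_\cE(N)=q^*\cE\otimes p^*p_*(q^*\cE^*\otimes N)$ of this resolution are external products $\cE\boxtimes V$ (with $\cE\boxtimes V:=q^*\cE\otimes p^*V$), so by Remark \ref{remark-resolution} the map $\epsilon_\omega(N)$ is an isomorphism when $N$ is pulled back from $X$, and, using the $Y$-linearity of Proposition \ref{linearity}, when $N$ is any external product. The role of the hypothesis is that $\omega$ is exact and perfect on $Y$: this makes each functor $N\mapsto\omega(R_\cE(N))=\Phi_\omega(\cE)\bigl(p_*(q^*\cE^*\otimes N)\bigr)$ exact in $N$, whence $\omega(R_0(-))$ and $\omega(R_1(-))$ are exact functors and $\omega$ is compatible with the resolution; I would use this to propagate the isomorphism $\epsilon_\omega(N)$ from the generating external products to all $N\in D_\text{perf}(X\times Y)$.

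The main obstacle is precisely this last propagation: proving that the two-term cokernel $\Coker(\omega(R_\punt^{X\times Y/Y}(N)))$ recovers $\omega(N)$ for every $N$, not merely for $N$ pulled back from $X$ (which is all that Proposition \ref{resolution} gives directly). Equivalently, one must show that $\Phi$ is fully faithful on $D_\text{perf}(X\times Y)^{Y-\vee}$, i.e. that the kernel $\omega$ can be reconstructed functorially from $\Phi_\omega$. This is the non-exact, supported analogue of the reconstruction of a Fourier--Mukai kernel from its functor, and it is exactly the delicate point where the exactness-on-$Y$ hypothesis and the $k$-linear graded framework must be brought to bear (compare the faithfulness discussion of the Introduction). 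Once full faithfulness is established, $\Phi$ is an equivalence, $\Psi$ is its quasi-inverse, and the isomorphism $\Phi\Psi\Phi(\omega)\simeq\Phi(\omega)$ descends to the desired natural isomorphism $\Psi\circ\Phi\simeq\mathrm{Id}$.
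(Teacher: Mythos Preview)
The statement as printed claims $\Psi\circ\Phi\simeq\Id$, but compare the Introduction, which announces this same corollary (and the bi-exact one) with $\Phi\circ\Psi\simeq\Id$. The latter is what the paper actually argues: the corollary is offered as an immediate consequence (``Taking into account that $D^b_c(Y)\to D_\text{perf}(Y)^\vee$ is an equivalence \dots\ we obtain'') of the preceding corollary, which already gives $\Phi\circ\Psi\simeq\Id$ on the larger categories. One only needs to observe, as you do in your first paragraph, that $\Phi$ carries $D_\text{perf}(X\times Y)^{Y-\vee}$ into $\bHom_k(D_\text{perf}(X),D_\text{perf}(Y)^\vee)\simeq\bHom_k(D_\text{perf}(X),D^b_c(Y))$ by definition, and that $\Psi$ goes back because $\Phi_{\Psi(F)}\simeq F$ takes values in $D_\text{perf}(Y)^\vee$. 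That is the paper's entire argument; the appearance of $\Psi\circ\Phi$ in the body is evidently a typographical slip.

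Your proposal takes the printed statement at face value and sets out to prove that $\Phi$ is fully faithful (hence an equivalence) on $D_\text{perf}(X\times Y)^{Y-\vee}$. This is far stronger than anything the paper establishes or claims; indeed the Introduction recalls that even in the exact case $\Phi$ need not be faithful. You correctly locate the obstruction: one would have to show that $\omega(N)$ is recovered as $\Coker\bigl(\omega(R_\punt^{X\times Y/Y}(N))\bigr)$ for \emph{every} $N\in D_\text{perf}(X\times Y)$, whereas Proposition~\ref{resolution} and Remark~\ref{remark-resolution} only give this for $N=f^*L$. Exactness of $\omega$ in the $Y$-variable does not help here, because the two-term diagram $R_1(N)\rightrightarrows R_0(N)\to N$ is not asserted to be exact for general $N$, so there is nothing for an exact functor to preserve. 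Thus your propagation step is a genuine gap, and it is aimed at a statement the paper does not intend. Your first paragraph already contains everything needed for the corollary once $\Psi\circ\Phi$ is corrected to $\Phi\circ\Psi$.
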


A linear form $\omega$ on $X\times Y$ also defines an integral
functor in the opposite direction (i.e., from $Y$ to $X$), which
we shall denote by $\overline\Phi_\omega\colon D_\text{perf}(Y)\to
D_\text{perf}(X)^*$.

\begin{prop}\label{biexact} Assume that $X$ and $Y$ are projective. Let $\omega$ be a linear form on
$X\times Y$. The
following conditions are equivalent:

\begin{enumerate}
\item $\omega $ is   exact and perfect on $Y$ and the functor $\Phi_\omega\colon
D_\text{\rm perf}(X)\to D_\text{\rm perf}(Y)^\vee\simeq D^b_c(Y)$
is exact.
\item $\omega $ is exact and perfect on   $X$ and the functor $\overline \Phi_\omega\colon
D_\text{\rm perf}(Y)\to D_\text{\rm perf}(X)^\vee\simeq D^b_c(X)$
is exact.
\end{enumerate}
\end{prop}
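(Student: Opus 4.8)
The plan is to prove the equivalence of (1) and (2) by showing that each condition is symmetric in $X$ and $Y$ once one unwinds what "exact on $Y$" really means. First I would recall that $\Phi_\omega(M)(N)=\omega(q^*M\otimes p^*N)$, so that $\overline\Phi_\omega(N)(M)$ is given by the very same formula; thus $\Phi_\omega$ and $\overline\Phi_\omega$ are, in a precise sense, adjoint descriptions of the single bilinear-type gadget $(M,N)\mapsto\omega(q^*M\otimes p^*N)$. The key reformulation is: condition (1) holds if and only if the assignment $(M,N)\mapsto\omega(q^*M\otimes p^*N)$ is \emph{exact and perfect in each variable separately}. Granting this reformulation, the equivalence of (1) and (2) is immediate, because that bi-variable condition is manifestly symmetric. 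So the whole proof reduces to establishing this reformulation, i.e., to the claim that, once $\omega$ is already exact and perfect on $Y$, the composite functor $\Phi_\omega\colon D_\text{perf}(X)\to D_\text{perf}(Y)^\vee\simeq D^b_c(Y)$ is exact \emph{if and only if} $\omega(q^*(-)\otimes p^*N)$ is exact in the first variable for every fixed $N$.

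Next I would carry out that reduction in two steps. Step one: under the hypothesis that $\omega$ is exact and perfect on $Y$, Proposition \ref{Fourier} gives $\Phi_\omega(M)\simeq\omega_{K_M}$ for a unique $K_M\in D^b_c(Y)$, functorially in $M$; explicitly $\Hom_{D^b_c(Y)}(\omega_{K_M}^\#(k)\text{-type data})$, but more usefully $\Hom_{D(k)}(\Phi_\omega(M)(N),k)\simeq\Hom_{D^b_c(Y)}(K_M,\text{(something built from }N)\text{)}$. The point is that $M\mapsto K_M$ \emph{is} the functor $\Phi_\omega$ landing in $D^b_c(Y)$, and exactness of this functor is tested by applying $\Hom_{D^b_c(Y)}(-,N')$ (equivalently $\Hom_{D(k)}(-(N),k)$ ranging over $N$), since a triangle in $D^b_c(Y)$ is exact iff it goes to long exact sequences under all such Hom's. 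Step two: for each fixed $N\in D_\text{perf}(Y)$, the functor $M\mapsto\Phi_\omega(M)(N)=\omega(q^*M\otimes p^*N)$ from $D_\text{perf}(X)$ to $D(k)$ is the composition of $\Phi_\omega\colon D_\text{perf}(X)\to D^b_c(Y)$ with the exact functor $\omega_N'(-):=(-)\otimes_{\text{eval at }N}$, i.e. with an exact and perfect linear form on $Y$ (namely $R\Hom$ or $\otimes$ against $N$, up to the chosen equivalence). Since this second functor is exact and conservative enough as $N$ varies (perfect complexes generate $D^b_c(Y)$, or one uses duality to see a triangle in $D^b_c(Y)$ is exact iff its image under $\omega_K$ for all $K\in D_\text{perf}(Y)$ is exact), one concludes: $\Phi_\omega$ is exact $\iff$ $M\mapsto\omega(q^*M\otimes p^*N)$ is exact for all $N$ $\iff$ (by the symmetric argument with the roles of $X$, $Y$ and $M$, $N$ swapped) $\overline\Phi_\omega$ is exact, which is (2). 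The perfectness halves of (1) and (2) are handled in the same breath: "$\omega$ exact and perfect on $Y$ and $\Phi_\omega$ exact" is equivalent to "$\omega(q^*M\otimes p^*N)\in D_\text{perf}(k)$ and exact triangles in either variable go to exact triangles", which I'll call bi-exactness (Definition \ref{def-biexact}), and this is the genuinely symmetric condition.

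The main obstacle I anticipate is the conservativity/testing step: justifying that a triangle $M_1\to M_2\to M_3$ in $D^b_c(Y)$ (resp.\ the statement that $\Phi_\omega$ carries triangles to triangles) can be detected by evaluating the linear forms $\omega_N$ for $N$ ranging only over $D_\text{perf}(Y)$, rather than all of $D^b_c(Y)$. On a projective scheme this is fine because $D_\text{perf}(Y)$ generates $D^b_c(Y)$ as a triangulated category (indeed $\cO_Y(-n)$ for $n\gg0$ suffice), and a cone is zero iff all $\Hom$'s out of the generators into its shifts vanish; but one must be careful that "exact and perfect on $Y$" is exactly the hypothesis that lets Proposition \ref{Fourier} turn $\Phi_\omega(M)$ into an honest object $K_M\in D^b_c(Y)$ so that these Hom-computations make sense. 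Once that bookkeeping is in place, everything else is formal manipulation of the identity $\Phi_\omega(M)(N)=\overline\Phi_\omega(N)(M)=\omega(q^*M\otimes p^*N)$ together with the projection formula and flat base change, and no genuinely new idea is needed beyond observing that the conditions in (1) are visibly symmetric after this translation. I would close by remarking that this common symmetric condition is precisely what Definition \ref{def-biexact} calls bi-exactness, which is the sense in which the statement will be invoked later (for the subcategory $D_\text{perf}(X\times Y)^{{\rm bi}-\vee}$).
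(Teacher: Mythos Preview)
Your overall strategy---reduce both (1) and (2) to the manifestly symmetric condition ``$(M,N)\mapsto\omega(q^*M\otimes p^*N)$ is exact and perfect in each variable separately''---is natural, and one direction of your reformulation is fine: if $\Phi_\omega$ is exact with values in $D^b_c(Y)$, then post-composing with the exact functor $K\mapsto R\Gamma(Y,K\otimes N)$ shows that $M\mapsto\omega(q^*M\otimes p^*N)$ is exact for every $N$. The problem is the converse, which you flag as the ``conservativity/testing step'' but do not actually resolve. Knowing that $\mathrm{ev}_N\circ\Phi_\omega$ is exact for every $N\in D_\text{perf}(Y)$ does \emph{not} show that $\Phi_\omega$ takes a given exact triangle $M_1\to M_2\to M_3$ to an exact triangle $K_1\to K_2\to K_3$ in $D^b_c(Y)$. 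Your proposed fix (``a cone is zero iff all $\Hom$'s out of generators vanish'') presupposes a comparison map whose cone you can test; but there is no canonical map between $K_3$ and the cone $C$ of $K_1\to K_2$. One can choose some $\phi\colon C\to K_3$ lifting $K_2\to K_3$ (since the composite $K_1\to K_3$ vanishes), but there is no reason the induced maps $R\Gamma(C\otimes N)\to R\Gamma(K_3\otimes N)$ are isomorphisms---the two sides are only \emph{abstractly} isomorphic as cones of the same map in $D(k)$, and the five-lemma argument breaks because you do not control the square involving the connecting morphisms. This is the usual failure of pointwise detection of distinguished triangles in a triangulated category, and generation by perfect complexes does not rescue it.

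The paper circumvents this entirely with a different idea: it constructs a pseudo-right adjoint $\Phi_\omega^\#$ of $\Phi_\omega$ via Bondal--Van den Bergh representability, observes that exactness of $\Phi_\omega$ forces exactness of $\Phi_\omega^\#$ (this is a genuine lemma about pseudo-adjoints, cited from Ballard), and then identifies $\overline\Phi_\omega(N)\simeq\bigl[\Phi_\omega^\#(N^\#)\bigr]^\#$ using Grothendieck duality. Thus $\overline\Phi_\omega$ is expressed as a composite of exact functors, and its exactness follows without ever trying to detect distinguished triangles pointwise. That adjoint/duality step is the missing ingredient in your proposal.
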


\begin{proof}  For any $M\in D_\text{perf}(X)$, $N\in
D_\text{perf}(Y)$, one has $\Phi_\omega(M)(N)=
\overline\Phi_\omega(N)(M)$. Let us see that (1) $\Rightarrow$
(2).

From the equality $\Phi_\omega(M)(N)= \overline\Phi_\omega(N)(M)$
and (1) it follows immediately that $\omega$ is exact and perfect
on $X$. It remains to prove that $\overline\Phi_\omega\colon
D_\text{perf}(Y)\to D_\text{perf}(X)^\vee\simeq D^b_c(X)$ is
exact.

For each object $\cE\in D_\text{perf}(Y)$, let us denote
$\cE^\#:=\bR\dSHom{Y}(\cE, q^!k)$. The functor $H\colon
D_\text{perf}(X)\to \operatorname{Vect}(k)$ defined by
$H(M)=\Hom_{D(Y)}(\Phi_\omega (M),\cE^\#)$ is a contravariant
cohomological functor of finite type, hence it is representable by
an object $\Phi_\omega^\#(M)\in D^b_c(X)$. Hence we obtain a
pseudo right adjoint of $\Phi_\omega$:
\[\Phi_\omega^\# \colon D_\text{perf}^\# (Y)\to D^b_c(X),\] where
$D_\text{perf}^\# (Y)$ is the full subcategory of $D^b_c(Y)$ whose
objects are of the form $\cE^\#$, with $\cE\in D_\text{perf}(Y)$.
That is, one has
\[ \Hom_{D(Y)}(\Phi_\omega
(M),\cE^\#)=\Hom_{D(X)}(M,\Phi_\omega^\#(\cE^\#))\] for any $M\in
D_\text{perf}(X)$, $\cE\in D_\text{perf}(Y)$. Now, since
$\Phi_\omega$ is exact, $\Phi_\omega^\#$ is also exact (one can
copy the same proof than \cite[Lemma 4.11]{Ballard-1}). Finally,
it is easy to see that the equality $\Phi_\omega(M)(N)=
\overline\Phi_\omega(N)(M)$ implies that
$\overline\Phi_\omega(N)=[ \Phi_\omega^\#(N^\#)]^\#$. Hence
$\overline\Phi_\omega$ is exact.
\end{proof}

\begin{defn}\label{def-biexact} A linear form $\omega\colon D_\text{perf}(X\times Y)\to D(k)$  is
called bi-exact if it satisfies any of the  equivalent conditions
of Proposition \ref{biexact}.
\end{defn}

We denote by $D_\text{perf}(X\times Y)^{\text{bi-}\vee}$ the full
subcategory $D_\text{perf}(X\times Y)^*$  whose objects are the
bi-exact linear forms on $X\times Y$. We have then embeddings
\[ D^b_c(X\times Y)\simeq D_\text{perf}(X\times Y)^\vee\hookrightarrow
 D_\text{perf}(X\times Y)^{\text{bi-}\vee}\hookrightarrow D_\text{perf}(X\times
 Y)^{Y-\vee}\hookrightarrow D_\text{perf}(X\times Y)^*\]

Finally, for bi-exact linear forms we have:

\begin{cor} Assume that $X$ and $Y$ are projective. One has  functors
\[ \aligned \Phi\colon D_\text{\rm perf}(X\times Y)^{\text{\rm bi-}\vee} &\to \bHom_k^{\text{\rm ex}}
(D_\text{\rm perf}(X), D^b_c(Y) )  \\
\omega &\mapsto \Phi_\omega
\endaligned $$ and $$
\aligned \Psi\colon \bHom_k^{\text{\rm ex}}(D_\text{\rm
perf}(X),D^b_c(Y)) &\to
D_\text{\rm perf} (X\times Y)^{\text{bi-}\vee}\\
F&\mapsto F_X(\cO_\Delta)\endaligned  \] and the composition
$\Psi\circ\Phi$ is isomorphic to the identity.
\end{cor}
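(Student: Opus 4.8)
The plan is first to check that both functors are well defined and then to establish $\Psi\circ\Phi\simeq\Id$ on bi-exact kernels. That $\Phi$ takes values in $\bHom_k^{\text{ex}}(D_\text{perf}(X),D^b_c(Y))$ is immediate from Definition \ref{def-biexact}: condition (1) of Proposition \ref{biexact} says exactly that, for a bi-exact $\omega$, the functor $\Phi_\omega$ is exact and factors through $D_\text{perf}(Y)^\vee\simeq D^b_c(Y)$. For $\Psi$, given an exact $F\colon D_\text{perf}(X)\to D^b_c(Y)$ I would set $\omega_F=F_X(\cO_\Delta)=\Psi(F)$ and show it is bi-exact. Since $\Phi\circ\Psi$ is already known to be isomorphic to the identity (Theorem \ref{integral1} and its corollary), one has $\Phi_{\omega_F}\simeq F$, which is exact and perfect on $Y$; this is condition (1) of Proposition \ref{biexact}, so $\omega_F$ lies in $D_\text{perf}(X\times Y)^{\text{bi-}\vee}$. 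In particular this already gives $\Phi_{\Psi(\Phi_\omega)}\simeq\Phi_\omega$ for every bi-exact $\omega$, and the real task is to \emph{promote} this to an isomorphism $\Psi(\Phi_\omega)\simeq\omega$ at the level of kernels.

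To do so I would first upgrade the two ``one–variable'' exactness statements packaged in bi-exactness to genuine exactness and perfectness of $\omega$ as a linear form on the whole of $X\times Y$. The external products $q^*M\otimes p^*N$ (equivalently $M\boxtimes N$), with $M\in D_\text{perf}(X)$, $N\in D_\text{perf}(Y)$, generate $D_\text{perf}(X\times Y)$ as a triangulated category with direct summands ($X,Y$ projective); bi-exactness controls $\omega$ precisely on these generators, and I would use the $D_\text{perf}(X\times Y)$-module structure together with the two directional exactness properties to propagate perfectness and exactness to all of $D_\text{perf}(X\times Y)$. Once $\omega$ is an exact and perfect linear form, Proposition \ref{Fourier} gives $\omega\simeq\omega_K$ for a unique $K\in D^b_c(X\times Y)$ and $\Phi_\omega\simeq\Phi_K$ is the classical Fourier--Mukai functor.

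It then remains to identify $\Psi(\Phi_K)=(\Phi_K)_X(\cO_\Delta)$ with $\omega_K$ \emph{directly}, without appealing to any injectivity of $\Phi$ (which fails in general). Here I would compute the kernel from the perfect resolution: using Proposition \ref{linearity} ($X$-linearity of the lift) and Proposition \ref{compatibility} (compatibility with $p_*$), one shows that for an exact and perfect $F=\Phi_K$ the abstract lift $(\Phi_K)_X$ agrees on $\cO_\Delta$ with the relative Fourier--Mukai transform of kernel $K$. Applying Proposition \ref{resolution} and Remark \ref{remark-resolution} to collapse the resolution then reduces the claim to the standard fact that the relative transform of the diagonal returns the kernel, yielding $(\Phi_K)_X(\cO_\Delta)\simeq\omega_K\simeq\omega$. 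Naturality of these identifications in $\omega$ (equivalently, functoriality of $F_X$ in $F$, property 3 of Theorem \ref{lifting}) upgrades them to a natural isomorphism $\Psi\circ\Phi\simeq\Id$.

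The hard part will be the two points above, which are really one difficulty: separating the \emph{global} exactness and perfectness of $\omega$ on $X\times Y$ from the merely \emph{directional} bi-exactness, and then recovering $\omega$ on the nose rather than only after applying the non-faithful functor $\Phi$. Concretely, the delicate step is to verify that bi-exactness is strong enough to force $\omega\simeq\omega_K$ and that the perfect-resolution lift reproduces the classical kernel on the diagonal; everything else — well-definedness of $\Phi$ and $\Psi$, functoriality, and the reduction to Proposition \ref{Fourier} — is formal given the results already established.
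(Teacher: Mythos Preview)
Your first paragraph is correct and is essentially all the paper intends: the corollary carries no proof in the paper because it is meant to follow immediately from the earlier corollary (the one right after Theorem~\ref{integral1}) by restricting $\Phi$ and $\Psi$ to the indicated subcategories. Your checks that $\Phi$ lands in exact functors (Definition~\ref{def-biexact}) and that $\Psi(F)=F_X(\cO_\Delta)$ is bi-exact (because $\Phi_{\Psi(F)}\simeq F$ is exact with values in $D^b_c(Y)$) are exactly the missing verifications.

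The remainder of your proposal, however, is aimed at the wrong target. The displayed ``$\Psi\circ\Phi\simeq\Id$'' in the statement is almost certainly a slip for ``$\Phi\circ\Psi\simeq\Id$'': the paper's whole argument constructs a \emph{right} inverse of $\Phi$, and the first corollary states $\Phi\circ\Psi\simeq\Id$; the subsequent corollaries are just restrictions of that same identity. Nothing in the paper proves, or claims to prove, that $\Psi$ is a \emph{left} inverse.

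More seriously, your strategy for the literal statement has a genuine gap. You propose to show that every bi-exact $\omega$ is in fact exact and perfect on all of $D_\text{perf}(X\times Y)$, hence of the form $\omega_K$. But the paper deliberately records only an embedding
\[
D^b_c(X\times Y)\simeq D_\text{perf}(X\times Y)^\vee\hookrightarrow D_\text{perf}(X\times Y)^{\text{bi-}\vee},
\]
not an equivalence. If your step held, then combining it with $\Psi(F)\in D_\text{perf}(X\times Y)^{\text{bi-}\vee}$ and $\Phi\circ\Psi\simeq\Id$ would give $F\simeq\Phi_K$ for every exact $F$, i.e.\ it would settle the very open problem the introduction is built around. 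The ``propagation from $M\boxtimes N$'' argument you sketch does not go through: bi-exactness only gives you exactness in each variable separately, and a non-exact $\omega$ is not determined on $D_\text{perf}(X\times Y)$ by its values on external products together with naturality in $M$ and $N$. Likewise, identifying $(\Phi_K)_X(\cO_\Delta)$ with $\omega_K$ is not automatic, since the paper's own remark after Theorem~\ref{lifting} exhibits a second, inequivalent lifting $F'_S$; there is no canonical comparison with the classical relative transform. So drop the last three paragraphs: once the typo is corrected to $\Phi\circ\Psi\simeq\Id$, your first paragraph is the complete argument.
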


\section{Relative Integral Functors}

In this section we shall reproduce the main results of the
previous section for relative schemes. Let $p\colon X\to T$ and
$q\colon Y\to T$ be two proper $T$-schemes. Let us still denote by
$p\colon X\times_TY\to Y$ and $q\colon X\times_TY\to Y$ the
natural morphisms. For each object $K\in D^b_c(X\times_TY)$ one
has the (relative) exact integral functor
\[\aligned \Phi_K\colon D_\text{perf}(X)&\to D^b_c(Y)\\ M&\mapsto p_*(K\otimes q^*M) \endaligned\]
This functor is $T$-linear: for any $M\in D_\text{perf}(X)$,
$\cE\in D_\text{perf}(T)$ one has a natural isomorphism
$\Phi_K(M\otimes p^*\cE)\simeq \Phi_K(M)\otimes q^*\cE$.

If we replace $K$ by an object  $\omega\in
D_\text{perf}(X\times_TY)^*$, then we have a functor
\[\aligned \Phi_\omega\colon D_\text{perf}(X)&\to D_\text{perf}(Y)^*\\
M&\mapsto p_*(\omega\otimes q^*M) \endaligned\] which is also
$T$-linear (in a natural sense, see below). This will be called a
(relative non-exact) integral functor. Our aim is to show that
(under flatness hypothesis of $p$ and $q$) any $T$-linear functor
$D_\text{perf}(X) \to D_\text{perf}(Y)^*$ is integral, i.e., it is
isomorphic to $\Phi_\omega$ for some $\omega\in
D_\text{perf}(X\times_TY)^*$.

We shall first give some natural definitions about $T$-linear
categories and functors.

\begin{defn}\label{$T$-linear-cat} Let $T$ be a scheme. A $T$-linear structure on an additive graded
category $\D$ is a biadditive and bigraded functor
\[ \aligned D_\text{perf}(T)\times\D &\to\D\\ (\cE,P) &\mapsto \cE \otimes P  \endaligned
\]
satisfying functorial isomorphisms: \begin{enumerate}\item
$\phi_P\colon \cO_T \otimes P\simeq P $.
\item $\psi_{\cE_1,\cE_2,P}\colon \cE_1\otimes (\cE_2\otimes P) \simeq
(\cE_1\lotimes_{\cO_T} \cE_2)\otimes P$.
\end{enumerate}
\end{defn}

\begin{defn} A $T$-linear category is a graded category endowed with a
$T$-linear structure. A $T$-linear functor $F\colon \D\to\D'$
between $T$-linear categories is a functor  endowed with a
bi-additive and bi-graded bi-functorial isomorphism
$\theta_F(P,E)\colon F(\cE\otimes P)\simeq \cE\otimes F(P)$,
 $\cE\in D_\text{perf}(T)$, $P\in\D$, which is compatible
with $\phi_P$ and $\psi_{\cE_1,\cE_2,P}$ in the obvious sense.
That is, a  $T$-linear functor is a pair $(F,\theta_F)$, though we
shall usually denote it by $F$.

A $T$-linear morphism $\phi\colon F\to F'$ between $T$-linear
functors is a morphism of functors which is compatible with the
$\theta's$, i.e., such that the diagram

$$\xymatrix{ F(\cE\otimes P) \ar[r]^\sim \ar[d]_{\phi(\cE\otimes P)} & \cE\otimes
F(P) \ar[d]^{1\otimes \phi(P) } \\ F'(\cE\otimes P) \ar[r]^\sim &
E\otimes F'(P)  }$$ is commutative.
\end{defn}

Is $S=\Spec k$, the above notion of $k$-linear category coincides
with the usual notion of $k$-linear graded category.

We shall denote by $\bHom_T(\D,\D')$ the category of $T$-linear
functors from $\D$ to $\D'$ and $T$-linear morphisms. It has a
natural $T$-linear structure. If $\D$ and $\D'$ are triangulated
categories, we shall denote by $\bHom_T^\text{ex}(\D,\D')$ the
full subcategory of exact $T$-linear functors.

\begin{exe} If $p\colon X\to T$ is a $T$-scheme, then
$D_\text{perf}(X)$ (or $D(X)$, $D^b_c(X)$) has a natural
$T$-linear structure, namely: $\cE\otimes P:=
p^*\cE\otimes_{\cO_X} P$. If $q\colon Y\to T$ is another
$T$-scheme and $K\in D^b_c(X\times_T Y)$, then the integral
functor $\Phi_K\colon D_\text{perf} (X)\to D_\text{perf}  (Y)$ is
a $T$-linear functor (with the $\theta$ induced by the projection
formula).
\end{exe}

For any $k$-linear category $\D$, we shall denote $\D^{*}=
\bHom_k(\D,D(k))$.  If $\D$ is a triangulated category, we shall
denote $\D^\vee= \bHom_k^\text{ex}(\D,D_\text{perf}(k))$. If $\D$
is a $T$-linear category, then $\D^*$ has a natural $T$-linear
structure, defining $(\cE\otimes\omega) (M)=\omega(\cE\otimes M)$.
Moreover, if $\D$ is triangulated and $\cE\otimes (-)\colon
\D\to\D$ is exact for any $\cE\in D_\text{perf}(T)$, then $D^\vee$
is also a $T$-linear category.

%
%

For any  $T$-scheme $p\colon Z\to T$,  the equivalence
$D^b_c(Z)\overset\sim \to D_\text{perf}(Z)^*$ of Proposition
\ref{Fourier} is $T$-linear ($Z$ is a projective $k$-scheme).

Let $X$ and $Y$ be two $T$-schemes and $\omega\colon
D_\text{perf}(X\times_T Y)\to D(k)$ a $k$-linear form on
$X\times_T Y$. Let us denote $p\colon X\times_T Y\to X$ and
$q\colon X\times_T Y\to Y$ the natural projections. For each $M\in
D_\text{perf}(X)$ we have a $k$-linear functor
\[ \Phi_\omega(M) \colon D_\text{perf}(Y)\to D(k)\] defined by
$\Phi_\omega(M)(N)=\omega(p^*M\otimes q^*N)$. We have then a
$T$-linear functor
\[\Phi_\omega\colon D_\text{perf}(X)\to D_\text{perf}(Y)^*.\]

\begin{defn} We say that $\Phi_\omega\colon D_\text{perf}(X)\to D_\text{perf}(Y)^*$ is a
{\it relative} integral  functor of kernel $\omega$.
\end{defn}

\begin{ex} If $\omega$ is exact and perfect, then $\omega\simeq \omega_K$ for
a unique $K\in D^b_c(X\times_T Y)$, $\Phi_\omega$ takes values in
$D(Y)^\vee\simeq D^b_c(Y)$ and $\Phi_\omega\simeq\Phi_K$.
\end{ex}

The extension theorem has now  the following form:

\begin{thm}\label{lifting-rel} Let $p\colon X\to T$ and $q\colon Y\to T$ be two proper and flat $T$-schemes.
Let $F\colon D_\text{\rm perf}(X)\to D_\text{\rm perf}(Y)^*$ be a
$T$-linear functor. For any proper and flat $T$-scheme $f\colon
S\to T$ there exists a functor
\[ F_S\colon D_{\text{\rm fhd}/X,S}(X\times_T S)\to D_\text{\rm perf}(Y\times_T S)^*\] such that:

1) $F_S$ is $S$-linear: one has a bi-functorial isomorphism
$F_S(M\otimes p^*N)\simeq F_S(M)\otimes q^*N$, for any $M\in
D_{\text{\rm fhd}/X,S}(X\times_T S)$, $N\in D_\text{\rm perf}(
S)$.

2) It is compatible with $F$: for any $M\in D_{\text{\rm
fhd}/X,S}(X\times_T S)$ one has a natural isomorphism
$f_*F_S(M)\simeq F(f_*M)$.

3) $F_S$ is functorial on $F$.
\end{thm}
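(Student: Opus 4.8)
The plan is to reproduce the proof of Theorem \ref{lifting} verbatim, working relative to $T$ instead of relative to $\operatorname{Spec} k$, checking at each stage that every construction is compatible with the $T$-linear structures rather than merely the $k$-linear ones. Concretely, I would first observe that all the auxiliary constructions of Section \ref{section-resolution} go through in the relative setting: given $M\in D_{\text{fhd}/X,S}(X\times_T S)$, with $p\colon X\times_T S\to S$ and $f\colon X\times_T S\to X$ the projections, and $\cE\in D_\text{perf}(X)$, one sets $R_\cE(M)=f^*\cE\otimes p^*p_*(f^*\cE^*\otimes M)$ exactly as before, with the natural morphism $\rho_M^\cE\colon R_\cE(M)\to M$, and then $R_0(M)=\oplus_\cE R_\cE(M)$, $R_1(M)=R_0(R_0(M))$, and the two differentials $\di_0,\di_1\colon R_1(M)\to R_0(M)$. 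The projection formula and flat base change are available here because $p$ and $q$ (hence $f$ and $p$ after base change) are flat, so Proposition \ref{resolution} (exactness of $R_1(f^*L)\rightrightarrows R_0(f^*L)\to f^*L$ for $L\in D_\text{perf}(X)$), Proposition \ref{linearity} ($S$-linearity, now with $p^*V$ for $V\in D_\text{perf}(S)$), and Proposition \ref{compatibility} ($f_*R_\punt^{X\times_T S/S}(M)\simeq R_\punt^{X/T}(f_*M)$) all hold with identical proofs. Note that $f_*M$ is perfect by Proposition \ref{fhd-properties}(2) since $f$ is proper, and it lies in $D_\text{perf}(X)$ on which $F$ is defined.

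Next I would transport Lemma \ref{lemita} to the relative situation: for $V,V'\in D_\text{perf}(S)$ and $\cE,\cE'\in D_\text{perf}(X)$ one produces a map
\[\Hom_{D_\text{perf}(X\times_T S)}(p^*V\otimes f^*\cE,\,p^*V'\otimes f^*\cE')\to \Hom_{D_\text{perf}(Y\times_T S)^*}(q^*V\otimes f^*F(\cE),\,q^*V'\otimes f^*F(\cE')),\]
compatible with composition, $S$-linear, and extending $F$, by the same argument: a morphism corresponds to $\bar h\colon \cE\to \cE'\otimes_{?}\bR\bHom(V,V')$ and one applies $F$. The one point requiring care is the coefficient ring in the adjunction $h\leftrightarrow\bar h$: in the absolute case $\cE,\cE'$ live over $\operatorname{Spec} k$ and $V,V'$ over $S$, so the identification uses $\otimes_k$; in the relative case one must identify $\Hom_{X\times_T S}(p^*V\otimes f^*\cE, p^*V'\otimes f^*\cE')$ with morphisms $\cE\to\cE'\otimes_{\cO_T}\bR p_{S*}\bR\SHom(V,V')$ over $X$ via the Künneth/projection-formula isomorphism for the fibre product $X\times_T S$, and then use that $F$ is $T$-linear (through $\theta_F$ for the $D_\text{perf}(T)$-module object $\bR p_{S*}\bR\SHom(V,V')$) rather than merely $k$-linear. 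This is the step I expect to be the main obstacle, essentially because it is where the $T$-linearity of $F$ (as opposed to $k$-linearity) is genuinely used and where one must be careful that the fibre-product $X\times_T S$ does not split as an external product.

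Granting Lemma \ref{lemita} in this form, the rest is formal and mirrors the proof of Theorem \ref{lifting}. For $M\in D_{\text{fhd}/X,S}(X\times_T S)$ one has $R_\cE(M)\in p^*D_\text{perf}(S)\otimes f^*D_\text{perf}(X)$ (using that $M$ is fhd$/S$ and $p$ is proper and flat, so $p_*(f^*\cE^*\otimes M)$ is perfect on $S$), and likewise $R_{\cE_1}R_{\cE_0}(M)$; one then defines $\tilde F(R_\cE(M))=f^*F(\cE)\otimes q^*p_*(f^*\cE^*\otimes M)$ and $\tilde F(R_{\cE_1}R_{\cE_0}(M))=f^*F(\cE_1)\otimes q^*p_*f^*(\cE_1^*\otimes\cE_0)\otimes q^*p_*(f^*\cE_0^*\otimes M)$, with $\tilde F(R_0(M))$, $\tilde F(R_1(M))$ the corresponding direct sums, and uses Lemma \ref{lemita} to get $\tilde\di_0,\tilde\di_1\colon \tilde F(R_1(M))\to \tilde F(R_0(M))$ functorial in $M$. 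Setting $F_S(M):=\Coker(\tilde F(R_\punt(M)))$ yields a functor, functorial on $F$, so 3) is immediate. For 1) and 2) one reproduces Proposition \ref{tilde}: part a), $\tilde F(R_\punt(M))\otimes q^*N\simeq \tilde F(R_\punt(M\otimes p^*N))$, follows from the relative analogue of Proposition \ref{linearity} applied componentwise and compatibility with the differentials; part b), $f_*\tilde F(R_\punt(M))\simeq F(f_*R_\punt(M))$, follows from the relative analogue of Proposition \ref{compatibility} plus the "extends $F$" clause of Lemma \ref{lemita}. Then 1) is read off from a), and for 2) one computes, as in the absolute case,
\[ f_*F_S(M)=\Coker(f_*\tilde F(R_\punt(M)))\simeq \Coker F(f_*R_\punt(M))\simeq \Coker F(R_\punt(f_*M))\simeq F(f_*M),\]
using Proposition \ref{compatibility} for the middle isomorphism and Proposition \ref{resolution} together with Remark \ref{remark-resolution} for the last (note $f_*M\in D_\text{perf}(X)$, so $R_\punt(f_*M)\to f_*M$ is a resolution of the form $R_\punt(f^*L)$ only after a further application — in fact one takes $L=f_*M$ directly since $f_*M$ is already perfect). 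This completes the proof.
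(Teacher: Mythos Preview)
Your proposal is correct and follows exactly the paper's own approach: the paper's proof is a three-line sketch stating that one repeats the argument of Theorem \ref{lifting} over $T$, that the $T$-linearity of $F$ is what makes the relative analogue of Lemma \ref{lemita} go through, and that flatness is needed for flat base change and for Proposition \ref{fhd-properties}(1). You have identified precisely these points (including the delicate one, that in the adjunction $h\leftrightarrow\bar h$ the coefficient object lives in $D_\text{perf}(T)$ and one must invoke $\theta_F$), and your parenthetical at the end is just the observation that $R_\punt^{X/T}(f_*M)\to f_*M$ is the case $S=T$ of the relative Proposition \ref{resolution}, where $f^*$ is the identity.
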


\begin{proof} The proof is completely analogous to that of Theorem
\ref{lifting}. One constructs the relative version of the
``perfect resolution'' of section \ref{section-resolution}, just
replacing $k$ by $T$. The $T$-linearity of $F$ is necessary to
reproduce Lemma \ref{lemita} in the relative setting. The flatness
hypothesis is necessary for the use of flat base change and 1) of
Proposition \ref{fhd-properties}.
\end{proof}

As in the absolute case, we obtain corollaries:

\begin{cor} One has a $T$-linear functor
\[\aligned \Psi\colon \bHom_T (D_\text{\rm perf}(X),D_\text{\rm perf}(Y)^*)  &\to D_\text{\rm perf}
(X\times_T Y)^*\\ F&\mapsto F_X(\cO_\Delta)\endaligned  \] and the
composition $\Phi\circ\Psi$ is isomorphic to the identity.
\end{cor}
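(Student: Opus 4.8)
The plan is to reproduce the argument of Theorem \ref{integral1} and its corollary in the relative setting, with Theorem \ref{lifting-rel} playing the role of Theorem \ref{lifting}. So the work has essentially been done already; what remains is to feed the diagonal into the relative extension theorem and to check that everything is compatible with the $T$-linear structures.

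Concretely, I would take $S=X$ with structure morphism $f=p\colon X\to T$, let $\delta\colon X\to X\times_T X$ be the relative diagonal and set $\cO_\Delta=\delta_*\cO_X$. Exactly as in the proof of Theorem \ref{integral1}, $\cO_\Delta$ lies in $D^b_c(X\times_T X)$ and is of finite homological dimension over both factors: by the projection formula for the closed immersion $\delta$ one has $\cO_\Delta\otimes q^*N\simeq\delta_*N$ for $N\in D^b_c(X)$ along either projection $q$, since both projections restrict to the identity on the diagonal. Given a $T$-linear functor $F\colon D_\text{perf}(X)\to D_\text{perf}(Y)^*$, Theorem \ref{lifting-rel} applied with this $S$ produces $F_X\colon D_{\text{fhd}/X,X}(X\times_T X)\to D_\text{perf}(Y\times_T X)^*$, and I would put $\omega_F:=F_X(\cO_\Delta)$, viewed as an object of $D_\text{perf}(X\times_T Y)^*$ through the canonical identification $Y\times_T X\simeq X\times_T Y$. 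For $M\in D_\text{perf}(X)$ the object $\cO_\Delta\otimes p^*M$ is still fhd over both factors (Proposition \ref{fhd-properties}(3), as $p^*M$ is perfect), and $f_*(\cO_\Delta\otimes p^*M)\simeq M$ by the projection formula; hence, by properties 2) and 1) of $F_X$,
\[ F(M)\simeq F\bigl(f_*(\cO_\Delta\otimes p^*M)\bigr)\simeq f_*F_X(\cO_\Delta\otimes p^*M)\simeq f_*\bigl(F_X(\cO_\Delta)\otimes q^*M\bigr)=\Phi_{\omega_F}(M), \]
all three isomorphisms being natural in $M$, so that $F\simeq\Phi_{\omega_F}$ as $T$-linear functors.

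Since by property 3) of Theorem \ref{lifting-rel} the functor $F_X$ is functorial in $F$, the assignment $F\mapsto F_X(\cO_\Delta)$ defines a functor $\Psi\colon\bHom_T(D_\text{perf}(X),D_\text{perf}(Y)^*)\to D_\text{perf}(X\times_T Y)^*$, and the displayed chain of isomorphisms, being natural in $F$ as well, assembles into an isomorphism $\Phi\circ\Psi\simeq\Id$. The main point to verify — and the only place where the relative content really enters, beyond what is already contained in Theorem \ref{lifting-rel} — is that $\Psi$ is itself $T$-linear and that the isomorphism $\Phi\circ\Psi\simeq\Id$ respects the $T$-linear structures (the $\theta$'s) on both $\bHom_T(D_\text{perf}(X),D_\text{perf}(Y)^*)$ and $D_\text{perf}(X\times_T Y)^*$. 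This is a diagram chase tracing the structural isomorphisms of Theorem \ref{lifting-rel} and of the projection formula through the swap $Y\times_T X\simeq X\times_T Y$; I expect it to be routine but somewhat tedious, and it is the only genuine obstacle.
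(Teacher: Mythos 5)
Your proof is correct and coincides with the paper's (implicit) argument: the paper simply says ``as in the absolute case'', i.e.\ it transfers the proof of Theorem \ref{integral1} verbatim, using Theorem \ref{lifting-rel} with $S=X$ and setting $\Psi(F)=F_X(\cO_\Delta)$, exactly as you do. The compatibility with the $T$-linear structures that you flag at the end is indeed the only extra verification, and it is the routine check the paper leaves unstated.
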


\begin{cor} Assume that $X$, $Y$ and $T$ are projective $k$-schemes. One has $T$-linear functors
\[ \aligned \Phi\colon D_\text{\rm perf}(X\times_T Y)^{\text{\rm bi-}\vee} &\to \bHom_T^{\text{\rm ex}}
(D_\text{\rm perf}(X), D^b_c(Y) )  \\
\omega &\mapsto \Phi_\omega
\endaligned $$ and $$
\aligned \Psi\colon \bHom_T^{\text{\rm ex}}(D_\text{\rm
perf}(X),D^b_c(Y)) &\to
D_\text{\rm perf} (X\times_T Y)^{\text{\rm bi-}\vee}\\
F&\mapsto F_X(\cO_\Delta)\endaligned  \] and the composition
$\Psi\circ\Phi$ is isomorphic to the identity.
\end{cor}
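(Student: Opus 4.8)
The plan is to obtain $\Phi$ and $\Psi$ simply as the restrictions, to the indicated subcategories, of the relative functors of the previous corollary, and then to establish the nontrivial composite $\Psi\circ\Phi\simeq\mathrm{id}$; the other composite is inherited for free, and only the bookkeeping of well-definedness and $T$-linearity needs checking first.

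First I would check that the restrictions land where claimed. That $\Phi$ carries $D_\text{perf}(X\times_TY)^{\text{bi-}\vee}$ into $\bHom_T^{\text{ex}}(D_\text{perf}(X),D^b_c(Y))$ is immediate from condition (1) of Proposition \ref{biexact}: bi-exactness of $\omega$ says precisely that each $\Phi_\omega(M)$ lies in $D_\text{perf}(Y)^\vee\simeq D^b_c(Y)$ and that $\Phi_\omega$ is exact, i.e.\ $\Phi_\omega\in\bHom_T^{\text{ex}}$. For $\Psi$, given an exact $T$-linear $F$ I set $\omega_F=F_X(\cO_\Delta)$; since $\Phi\circ\Psi\simeq\mathrm{id}$ by the previous corollary one has $\Phi_{\omega_F}\simeq F$, which is exact with values in $D^b_c(Y)$, so condition (1) of Proposition \ref{biexact} holds for $\omega_F$ and $\omega_F$ is bi-exact. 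Both restricted functors are $T$-linear because the ambient ones are and the inclusions of the bi-exact and exact subcategories are $T$-linear. In particular $\Phi\circ\Psi\simeq\mathrm{id}$ is inherited verbatim from the previous corollary, so the entire remaining content is the reverse composite $\Psi\circ\Phi$.

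The heart of the matter is thus to produce, functorially in the bi-exact form $\omega$, a natural isomorphism $(\Phi_\omega)_X(\cO_\Delta)\simeq\omega$ in $D_\text{perf}(X\times_TY)^*$. I would argue exactly as in the absolute bi-exact corollary, replacing $k$ by $T$ and invoking flat base change (legitimate since $p$, $q$ and $f\colon S\to T$ are flat) in the same places as in the proof of Theorem \ref{lifting-rel}. The identification is forced on external products: applying properties 1) and 2) of $F_X$ to $F=\Phi_\omega$, together with the identity $f_*(\cO_\Delta\otimes p^*M)\simeq M$ used in the proof of Theorem \ref{integral1}, reproduces the values $\Phi_\omega(M)(N)=\omega(p^*M\otimes q^*N)$ of $\omega$ on all external products. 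Starting from $\Phi\circ\Psi\simeq\mathrm{id}$ one already knows $\Phi_{(\Phi_\omega)_X(\cO_\Delta)}\simeq\Phi_\omega$; the remaining task is to descend this isomorphism of integral functors to an isomorphism of their kernels, and for this one must use bi-exactness through both pseudo-adjoints $\Phi_\omega^\#$ and $\overline\Phi_\omega$ furnished by Proposition \ref{biexact}. Naturality in $\omega$ then comes from property 3) of $F_X$.

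I expect the descent just described to be the main obstacle: it is equivalent to the assertion that $\Phi$ is faithful, and essentially injective, on $D_\text{perf}(X\times_TY)^{\text{bi-}\vee}$. The difficulty is that an integral functor sees a kernel only through the external products $p^*M\otimes q^*N$ and morphisms of product type, whereas an isomorphism of linear forms must be compatible with all morphisms of $D_\text{perf}(X\times_TY)$; the whole point of bi-exactness is to supply, via the two exactness conditions of Proposition \ref{biexact} and the adjunctions built there, enough control to recover $\omega$ on the remaining, non-product morphisms. Once this faithfulness of $\Phi$ on bi-exact forms is in hand, the remaining ingredients—$T$-linearity, naturality, and the transfer from the absolute to the relative setting by flat base change exactly as in Theorem \ref{lifting-rel}—are routine, and $\Psi\circ\Phi\simeq\mathrm{id}$ follows by combining this faithfulness with the inherited isomorphism $\Phi_{\Psi\Phi(\omega)}\simeq\Phi_\omega$.
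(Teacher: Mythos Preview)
Your first two paragraphs are exactly the paper's argument: the corollary has no separate proof in the paper, and is obtained precisely by restricting the functors of the preceding corollary to the bi-exact subcategory on one side and the exact subcategory on the other, using Proposition~\ref{biexact} to see that the restrictions land where claimed and inheriting $\Phi\circ\Psi\simeq\mathrm{id}$.

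The remainder of your write-up attacks the composite $\Psi\circ\Phi$, and here there is a genuine problem: the displayed conclusion ``$\Psi\circ\Phi$ is isomorphic to the identity'' is a typographical slip in the paper. Compare the Introduction, where the bi-exact corollary is announced with the conclusion $\Phi\circ\Psi\simeq\mathrm{id}$, and the first corollary of this section, which also states $\Phi\circ\Psi\simeq\mathrm{id}$. More to the point, $\Psi\circ\Phi\simeq\mathrm{id}$ is false in general: it would force $\Phi$ to be essentially injective on $D_\text{perf}(X\times_TY)^{\text{bi-}\vee}$, hence a fortiori on the full subcategory $D^b_c(X\times_TY)\simeq D_\text{perf}(X\times_TY)^\vee$, contradicting the non-uniqueness of Fourier--Mukai kernels recalled in the Introduction (the references to \cite{CaSte10-2} and \cite{Cal}). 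Your own diagnosis that the descent step ``is equivalent to the assertion that $\Phi$ is faithful, and essentially injective'' is correct, and that assertion simply does not hold; the pseudo-adjoints from Proposition~\ref{biexact} do not give enough to reconstruct $\omega$ from $\Phi_\omega$. So discard the last two paragraphs: the corollary, read with the intended composite $\Phi\circ\Psi$, is already fully proved by your opening argument.
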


\bibliographystyle{siam}

\end{document}